\def\deg{d}         
\def\pof{\mbox{{\it pof}}@}
\def\pofcutoff{\mbox{{\it pofcutoff}}@}
\def\maxpof{\text{{\it maxp\kern-1pt o\kern-.25pt f}}}
\def\pof{\mbox{{\it pof}}@}
\def\Re{\operatorname{Re}}
\active \gdef@{\mkern1mu}} 
\begin{document}

\title{Polynomial Preconditioning for Indefinite Matrices }

\author{Hayden Henson\footnotemark[2]
\and Ronald B. Morgan\footnotemark[3] }

\maketitle

\renewcommand{\thefootnote}{\fnsymbol{footnote}}
\footnotetext[2]{Department of Mathematics, Baylor University, Waco, TX 76798-7328 ({\tt Hayden\_Henson1@baylor.edu}).}
\footnotetext[3]{Department of Mathematics, Baylor University, Waco, TX 76798-7328 ({\tt Ronald\_Morgan@baylor.edu}).}
\renewcommand{\thefootnote}{\arabic{footnote}}

\begin{abstract}
Polynomial preconditioning is an important tool in solving large linear systems and eigenvalue problems.  A polynomial from GMRES can be used to precondition restarted GMRES and restarted Arnoldi.  Here we give methods for indefinite matrices that make polynomial preconditioning more generally applicable.  The new techniques include balancing the polynomial so that it produces a definite spectrum.  Then a stability approach is given that is specialized for the indefinite case.  Also, very complex spectra are examined.  Then convergence estimates are given for polynomial preconditioning of real, indefinite spectra.  Finally, tests are preformed of finding interior eigenvalues.
\end{abstract}

\begin{keywords}
 linear equations, polynomial preconditioning, GMRES, indefinite matrices
\end{keywords}

\begin{AMS}
65F10, 15A06, 
\end{AMS}

\pagestyle{myheadings}
\thispagestyle{plain}
\markboth{H. HENSON and R. B. MORGAN}{INDEFINITE POLYNOMIAL PRECONDITIONING}

\section{Introduction}

Polynomial preconditioning is a powerful tool for improving convergence when solving large linear equations~\cite{PPGStable} or finding eigenvalues~\cite{PPArn}.  However, there can be difficulties for indefinite linear equations and interior eigenvalue problems.  Here we give techniques for polynomial preconditioning to be effective in these situations. 
This is important because indefinite/interior problems tend to be difficult and so especially benefit from polynomial preconditioning.    

Polynomial preconditioning has been extensively studied; see for example~\cite{La52B,Sti58,Rutis,Sa84b,Sa87b,As87,SmSa,FiRe,AsMaOt,Jo,vGi95,Sa03,Tho06,seed,PPG,LiXiVeYaSa,PPArn,PPGStable,YeXiSa}.  The GMRES polynomial was used in~\cite{NaReTr,Jo94b} for Richardson iteration and in~\cite{Tho06,seed,PPG} for polynomial preconditioning.  However, more recently in~\cite{PPArn,PPGStable}, the GMRES polynomial was improved in efficiency, stability, and ease of determining the polynomial.  This implementation uses roots of the polynomial which enhances stability and allows for the insertion of extra copies of roots for further stability control.  

For indefinite problems, it is desirable to have the polynomial preconditioning turn the spectrum definite.  For this, we choose a polynomial that we call ``balanced".  This balanced polynomial has derivative zero at the origin.  Several methods for balancing are given.  Some guidance is included for which to use.  Also cubic Hermite splines are suggested for checking if a polynomial will be effective.  Significantly complex spectra are considered, and one conclusion is that balancing will probably not be helpful if the origin is mostly surrounded by eigenvalues.

Next, the stability control method in~\cite{PPArn,PPGStable} may be ineffective for indefinite problems, because adding small roots to one side of the origin can increase the polynomial size and variability on the other side.  This is addressed by not adding roots on the smaller side and applying stability fixes of deflating eigenvalues and a short GMRES run.  

Section 2 of the paper has quick review of items needed in this paper.  Section 3 has techniques for balancing the polynomial.  Section 4 focuses on matrices with significantly complex spectra.  Then stability of the polynomial is in Section 5, and Section 6 has convergence estimates for polynomial preconditioning indefinite linear equations.  Finally, interior eigenvalue problems are in Section 7.

\section{Review}

\subsection{Polynomial Preconditioning with the GMRES Polynomial}

Polynomial preconditioning is a way to transform the spectrum of a matrix and thus improve convergence of Krylov iterative methods.
For linear equations with polynomial $p$ and right preconditioning, this is 
\[
Ap(A)y = b, \qquad x = p(A)y. \label{eqn:ppsys}
\]
Defining $\varphi(z) \equiv z@ p(z)$, the preconditioned system of linear equations is 
\[\varphi(A)y = b.\]

In~\cite{PPArn,PPGStable}, the polynomials are found with the GMRES algorithm~\cite{SaSc}.   Starting with the GMRES residual polynomial $\pi$, the polynomial $\varphi$ is chosen as $\varphi(z) = 1 - \pi(z)$, and thus $p$ is also determined.  The roots of $\pi$ are the harmonic Ritz values~\cite{IE,PaPavdV,IEN}, and they are used to implement both polynomials $\varphi$ and $p$.  Then GMRES can also be used to solve the linear equations as a part of polynomial preconditioned GMRES, see Algorithm \ref{Alg:PPGMRESmain}, which is from~\cite{PPGStable}.  
Note that if $A$ is a real matrix, then $\varphi$ has real coefficients and $\varphi(A)$ is real.

\begin{algorithm}[h!]
    \caption{Polynomial Preconditioned GMRES, PP(d)-GMRES(m)}
\medskip
\begin{enumerate}
\item {\bf Construction of the polynomial preconditioner:} 
\begin{enumerate}
\item For a degree $d$ preconditioner, run one cycle of GMRES($\deg$) using a random starting vector.
\item Find the harmonic Ritz values $\theta_1, \ldots, \theta_\deg$, which are the roots of the GMRES polynomial: with Arnoldi decomposition $AV_\deg = V_{\deg+1} H_{\deg+1,\deg}$, find the eigenvalues of $H_{\deg,\deg}^{} + h_{\deg+1,\deg}^2 @@f e_\deg^T$, where $f = H_{\deg,\deg}^{-*} e_\deg^{}$ with elementary coordinate vector $e_\deg=[0,\ldots,0,1]^T$.     
\item Order the GMRES roots using Leja ordering~\cite[Alg.~3.1]{BaHuRe} and apply stability control as in~\cite[Algorithm 2]{PPGStable}.
\end{enumerate}
\medskip
\item {\bf PP-GMRES:} Apply restarted GMRES to the matrix $\varphi(A) = I-\Pi_{i=1}^{d}(I - A/\theta_i)$ to compute an approximate solution to the right-preconditioned system $\varphi(A)y=b$, using~\cite[Algorithm 1]{PPGStable} for $\varphi(A)$ times a vector. To find $x$, compute $p(A)y$ with~\cite[Algorithm 3]{PPGStable}.
\end{enumerate}
\label{Alg:PPGMRESmain}
\end{algorithm}

\subsection{Stability control}

For a matrix with an eigenvalue that stands out from the rest of the spectrum, the GMRES residual polynomial $\pi$ generally has a root at the eigenvalue.  The slope of this polynomial is likely to be very steep at that root which can lead to ill-conditioning and cause $\varphi(A)$ times a vector to be unstable.  To improve stability, extra copies of roots corresponding to outstanding eigenvalues can be added to $\pi$.
This is implemented in~\cite[Algorithm 2]{PPGStable} (see also~\cite[p.~A21]{PPArn}).  For each root $\theta_k$, one computes a diagnostic quantity called $\pof(k)$ that measures the magnitude of $\pi(\theta_k)$ with the $(1-z/\theta_k)$ term removed.  When $\log_{10}(\pof(k))$ exceeds some threshold $\pofcutoff$, extra $(1-z/\theta_k)$ terms are appended to $\pi$ (in~\cite[Algorithm 2]{PPGStable}, $\pofcutoff$ is set to 4). 

\subsection{Range Restricted GMRES}

Some image processing problems have matrices with zero and very small eigenvalues that correspond to noise.  The associated linear equations need to be solved so that the effect of these small eigenvalues is essentially removed.  Range Restricted GMRES (RRGMRES)~\cite{CaLeRe} chooses $A\cdot b$ as the initial vector for its subspace.  This starting vector has small eigencomponents corresponding to the small eigenvalues, thus reducing their effect.

\section{Balancing the Polynomial}

In this section, we give ways of adjusting polynomial preconditioning to make it more effective for indefinite problems.  We begin with an example showing that polynomial preconditioning with the GMRES polynomial can be very effective but needs some improvement. 

{\it Example 1.} We use a matrix that is bidiagonal with diagonal elements $-2500,$ $-2499, -2498, \ldots, -2, -1, 1, 2, 3, \ldots, 2499, 2500$ and super-diagonal elements all 1.  So while the matrix is nonsymmetric, the spectrum is real and is mirrored about the origin.  The right-hand side is generated random normal and then is scaled to norm 1.  The residual norm tolerance is $10^{-10}$.

Table 3.1 has results comparing restarted GMRES(50) to PP(d)-GMRES(50), which stands for polynomial preconditioned GMRES with $\varphi$ polynomial of degree $d$ and GMRES restarted at dimension 50.  The preconditioning is effective for this very indefinite case.  Comparing no polynomial preconditioning ($d=1$) to $d = 50$, the polynomial preconditioning improves the time by a factor of 200.  The top of Figure 3.1 shows the polynomials used for degrees 5, 25 and 50.  The spectra are significantly improved by the polynomial with degrees 25 and 50.  The eigenvalues are mapped to near 1, except for eigenvalues near the origin.  This explains why the method is effective for these polynomials.  However, with the degree 5 polynomial, many eigenvalues near zero are mapped very close to zero.  This is especially shown in the closeup in the bottom half of the figure, with the 200 eigenvalues closest to the origin all mapped very near zero.

Looking further at the results in Table 3.1, higher degree polynomials can be even more effective.  However for degree 150, it takes about 10 times longer than degree 151.  This is explained by the closeup in the bottom of Figure 3.2. Both degree 150 and 151 polynomials dip into the lower half of the plane, so the spectrum of $\varphi(A)$ is indefinite. Both have 11 negative eigenvalues.  But the important difference is that degree 150 happens to have an eigenvalue fall very near zero.  This smallest eigenvalue of $\varphi(A)$ is at $-2.6*10^{-4}$, compared to smallest at $3.0*10^{-3}$ for degree 151.  Having one very small eigenvalue can significantly slow down GMRES(50).   

So this example gives two reasons for why we need a polynomial that stays in the top half of the plane over the spectrum.  Such a polynomial gives a definite spectrum and avoids creating very small eigenvalues for $\varphi(A)$.

\begin{table}
\caption{Bidiagonal matrix from Example 1 which is nonsymmetric with a real, symmetric (mirrored) spectrum.  PP(d)-GMRES(50) is used to solve linear equations.   MVP's is the total matrix-vector products, v op's is the total length-n vector operations and dp's is the total dot products. }
\begin{center}
\begin{tabular}{|c||c|c|c|c||c|c|c|c|}  \hline\hline
& \multicolumn{4}{|c||}{GMRES Polynomial}       &  \multicolumn{4}{|c|}{Balanced Poly }  \\ 
& \multicolumn{4}{|c||}{}                       &  \multicolumn{4}{|c|}{- Method 1 - Added Root }  \\   \hline
d, deg                 & Time              & MVP's     &  v op's   & dp's      & Time      & MVP's     &  v op's   & dp's  \\ 
of $\varphi$           & (sec's)           & (tho's)  &  (mil's)   & (mil's)   &  (sec's)  & (tho's)   & (mil's)   & (mil's) \\   \hline \hline
1                       & 2346              & 4363      &  245      & 116       &           &           &       &   \\ 
(No PP)                 &                   &           &           &           &           &           &       &    \\   \hline
5                       &  -                &  -        &   -       & -         &  198      & 2014      & 20.5  & 8.91   \\ \hline
10                      & 280               & 4305      &  28.0     & 11.4      &  47.1     & 767       & 4.61  & 1.85  \\ \hline 
50                      & 11.5              & 444       &  0.94     & 0.24      &  2.77     & 95.3      & 0.20   & 0.051 \\ \hline 
100                     & 10.5              & 535       &  0.84     & 0.15      &  2.18     & 86.7      & 0.14   & 0.028 \\ \hline 
150                     & 43.6              & 2442      &  3.36     & 0.44      &  1.79     & 64.9      & 0.11   & 0.023 \\ \hline
151                     & 4.35              & 212       &  0.31     & 0.048     &  1.86     & 68.6      & 0.12   & 0.024 \\ \hline 
200                     & 3.85              & 180       &  0.27     & 0.044     &  1.86     & 61.5      & 0.12   & 0.028 \\ \hline 
250                     & 3.60              & 146       &  0.24     & 0.047     &  2.07     & 47.9      & 0.12   & 0.036   \\ \hline 
\hline
\end{tabular}
\end{center}
\end{table}

\begin{figure}
\vspace{-2.6in}
\hspace{-.9in}
\includegraphics[scale=.75]{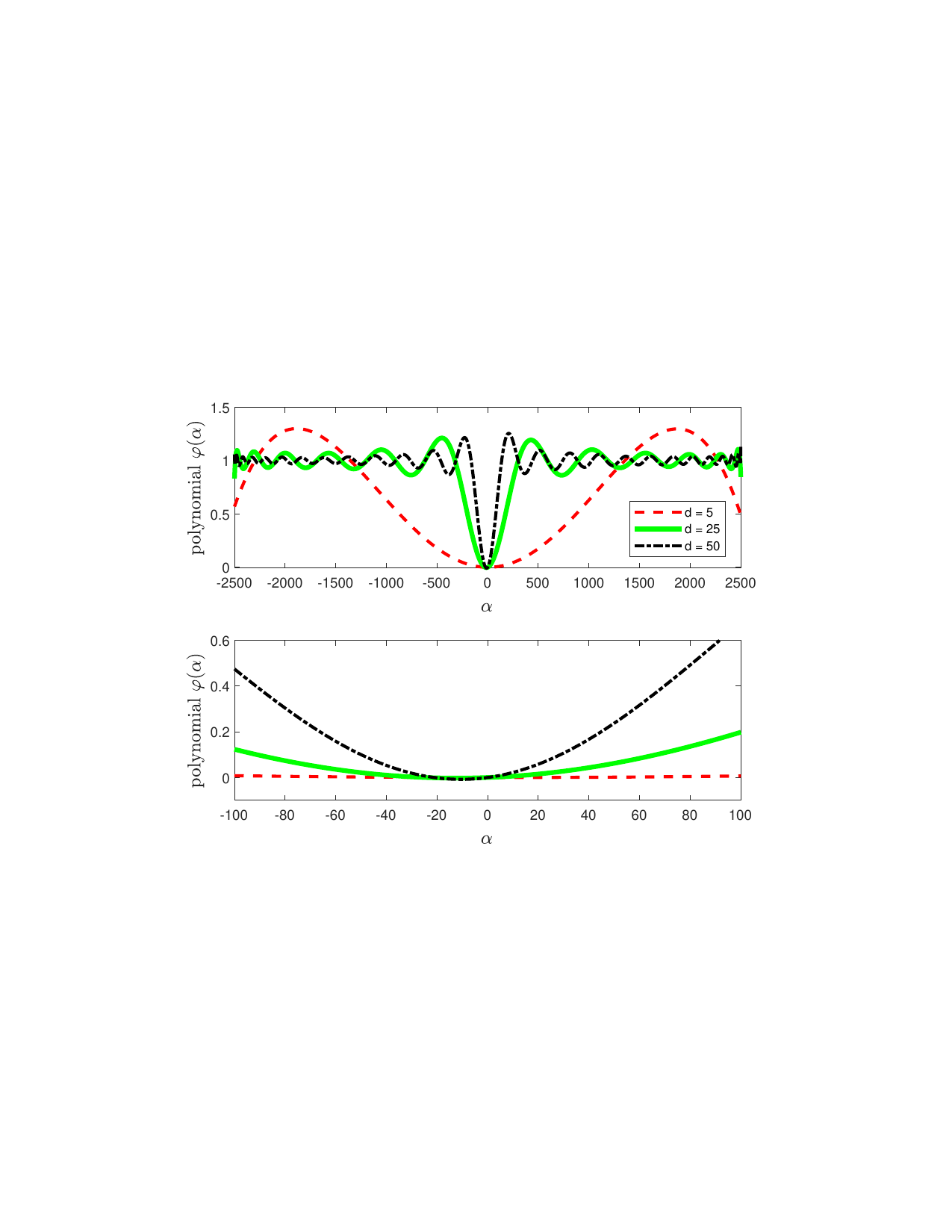}
\vspace{-2.8in}
\caption{Bidiagonal matrix of size $n = 5000$.  Top has $\varphi$ polynomials of degree $d=5$, $d=25$ and $d=50$.  Bottom has a closeup of the same polynomials. }
\label{fig:SymmSpec}
\vspace{-0.1in}
\end{figure}


\begin{figure}
\vspace{-2.6in}
\hspace{-.9in}
\includegraphics[scale=.75]{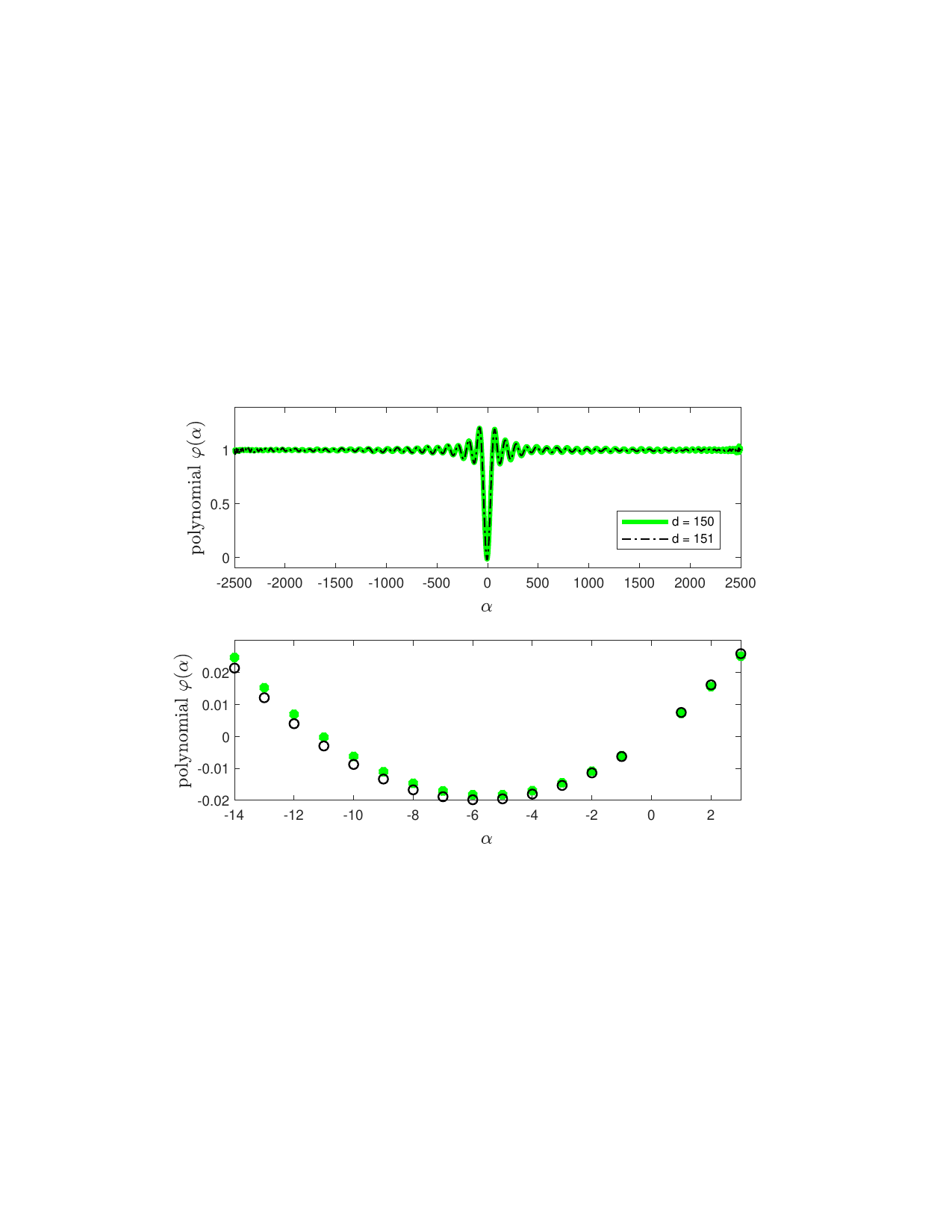}
\vspace{-2.8in}
\caption{Bidiagonal matrix of size $n = 5000$.  Top has $\varphi$ polynomials of degree $d=150$ and $d= 151$.  Bottom has a closeup of the same polynomials plotted at the eigenvalues. }
\label{fig:SymmSpec2}
\end{figure}

We call a polynomial ``balanced" if it has slope zero with respect to the real axis at the origin.  This way it will be positive over the real axis near the origin and hopefully over all the spectrum.  We will give several ways of balancing the polynomial. 

\subsection{Balance Method 1:  Add a root}

The first way of balancing the polynomial is to add a root that makes the slope zero at the origin.  As with adding roots for stability in Subsection 2.2, the resulting polynomial is no longer a minimum residual polynomial. But it may nearly have that property since it is a modification of the GMRES polynomial.

The slope of the polynomial $\varphi$ at the origin is 
\begin{equation} \label{eq:3.1}
    \varphi'(0) = \sum_{i=1}^d\frac{1}{\theta_i} \nonumber.
\end{equation}

In order to balance it, we add one extra root to $\pi$ to make the slope $\varphi$ at the origin zero.  We call this root the ``balancing root" defined as $\eta=-1/\sum_{i=1}^{d}\frac{1}{\theta_i}$ and denote the new polynomial as $\varphi_1$, see Algorithm~\ref{alg:one}.

\begin{algorithm}
\caption{Balance Method 1: Add a Root.}\label{alg:one}
\begin{description}
 \item[0.] Apply GMRES(d) to form a polynomial $\varphi(\alpha)$ of degree $d$.  Let the roots of the corresponding polynomial $\pi(\alpha)$ (harmonic Ritz values) be $\theta_1, \ldots, \theta_d$.
 \item[1.] Compute $\eta = - 1/\sum_{i=1}^d\frac{1}{\theta_i}$ and add it to the list of polynomial roots of $\pi$.
 \item[2.] The new polynomial of degree $d+1$ is $\varphi_1(\alpha) = 1-\pi(\alpha)\cdot\Big(1 - \frac{\alpha}{\eta}\Big)$.
\end{description}
\end{algorithm}

{\it Example 1 (continued).} 
The right half of Table 3.1 is for the balanced polynomial.  All of the $\varphi_1$ polynomials used are one higher degree than the degree listed on the left because of the added root.  The results are consistently better with the balancing.  Even the degree six polynomial ($d=5$ before the added root) is effective, converging in 198 seconds compared to over 2000 seconds without polynomial preconditioning.  For polynomials with $d=50$ and above, the times are all below three seconds, and dot products are reduced by more than three orders of magnitude versus no polynomial preconditioning.

{\it Example 2.}  We next use a matrix that is bidiagonal with diagonal elements $-100, -99,$ $-98, \ldots, -2, -1, 1, 2, 3, \ldots, 4899, 4900$ and super-diagonal elements all 1.  This matrix is much less indefinite than the first one. 
Table 3.2 has timing results for PP(d)-GMRES(50) with different degree polynomials and with different balance methods.  This example motivates the balance methods that will be given in the next few subsections.  

With Balance Method 1, the best times are an improvement of more than a factor of 150 compared to no preconditioning. 
However, for the degree five polynomial, balancing is not necessary.  Figure 3.3 shows why.  The unbalanced approach has a polynomial that goes negative over the negative part of the spectrum of $A$, so the resulting polynomial preconditioned spectrum remains indefinite.  But it does not have as many small eigenvalues as with Balance Method 1 which is fairly flat around the origin.  Specifically, with the original degree five polynomial, $\varphi(A)$ has 100 negative eigenvalues, but only 12 with absolute value less than 0.02 and smallest $3.2*10^{-3}$.  Meanwhile, with the balanced degree six polynomial, $\varphi_1(A)$ has all positive eigenvalues, but has 108 less than 0.02 and the smallest is $6.7*10^{-6}$.  
At degree 10, convergence is not achieved without balancing as the residual norm stalls at $0.0217$.  Balance Method 1 is a big improvement for this degree and degrees 15 and 25. For degrees 50 and higher, convergence is reached in under three seconds even without balancing, though balancing still yields improvements in some cases. 

The degree 9 polynomial with Balance Method 1 (so degree 10 with the extra root) does poorly.  The added root is around -645 and the linear factor $\left(1+\frac{\alpha}{645}\right)$  causes the polynomial to increase in size at the large part of the spectrum.  Figure~\ref{fig:Spec100neg2} shows this polynomial dips below zero and so gives an indefinite spectrum.  The degree 200 polynomial has a similar problem as degree 9.  Balance Method 2 is developed next to deal with these situations.

\begin{table}
\vspace{-0.1in}
\caption{Bidiagonal matrix with diagonal $-100, -99, \ldots, -1, 1, 2, 3, \ldots, 4900$, and superdiagonal of ones.  PP(d)-GMRES(50) is used to solve linear equations.  Times are compared for different ways of balancing the polynomial.  Times are given in seconds.  For Balance 2, ``same" means that no root is subtracted but a root is added, so it is the same as Balance 1.  Superscript~$^*$ indicates the residual norm only reaches $7.94*10^{-10}$, and superscript~$^\dagger$ indicates the residual norm only reaches $6.44*10^{-10}$.  For Balance 4, the degree 10 polynomial is composed of a degree 5 inner polynomial and a degree 2 outer polynomial.  Similarly, degree 15 and 25 have inner polynomial of degree 5, while the higher ones have inner polynomial of degree 10.}
\begin{center}
\begin{tabular}{|c||c|c|c|c|c|}  \hline\hline

d - degree              & No balance        & Balance 1     &  Balance 2        & Balance 3             & Balance 4 \\ 
of poly $\varphi$       & (seconds)         &  add root    &  remove, add       & RRG poly              & Composite  \\   \hline \hline
No PP                   & 229               &               &                   &                       &             \\ \hline
5                       & 18.6              & 52.8          & 118               & 97.4                  &              \\ \hline
9                       & 17.3              & 373           & 28.2              & 13.9                  &               \\ \hline 
10                      & -                 & 8.44          & same              & 9.48                  &  34.9          \\ \hline 
15                      & 10.6              & 3.16          & 3.09              & 3.33                  &  19.2           \\ \hline 
25                      & 20.4              & 2.33          & 2.62              & 1.92                  &  9.28            \\ \hline 
50                      & 2.92              & 1.83          & 2.11              & 1.59                  &  1.53             \\ \hline
100                     & 1.95              & 1.92          & same              & 1.23                  &  1.06              \\ \hline 
150                     & 1.82              & 1.44          & 3.04              & $1.87^*$              &  1.10               \\ \hline 
200                     & 2.08              & -             & 2.69              & $2.20^\dagger$        &  0.77                \\ \hline 
\hline
\end{tabular}
\end{center}
\label{Tab:Ex2}
\end{table}

\begin{figure}
\vspace{-2.6in}
\hspace{-.9in}
\includegraphics[scale=.75]{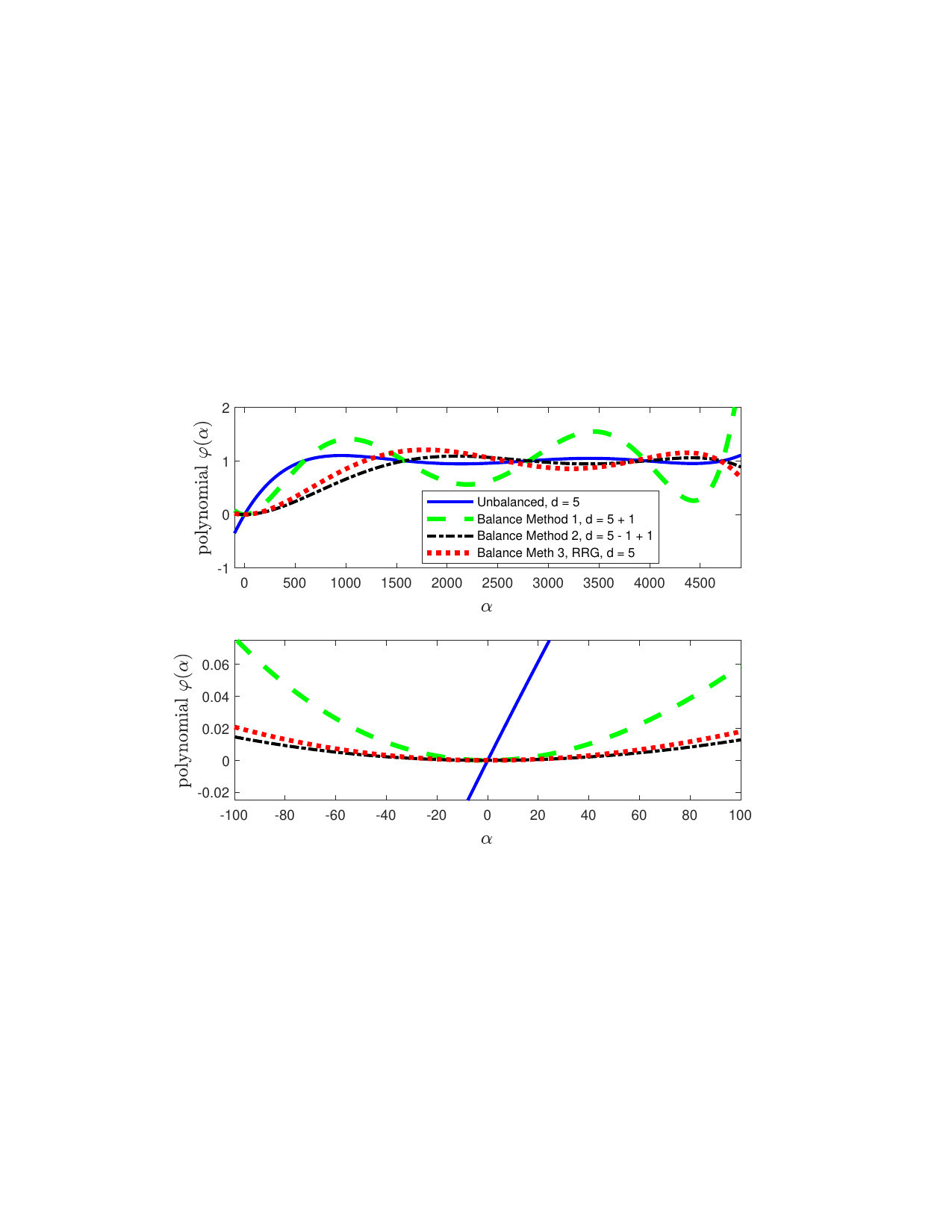}
\vspace{-2.8in}
\caption{Bidiagonal matrix of size $n = 5000$ with 100 negative eigenvalues.  Top has $\varphi$ polynomials of original degree $d=5$ with different balancing methods.   Bottom has a closeup of the same polynomials near the origin. }
\label{fig:Spec100neg}
\vspace{-0.1in}
\end{figure}

\begin{figure}
\vspace{-2.6in}
\hspace{-.9in}
\includegraphics[scale=.75]{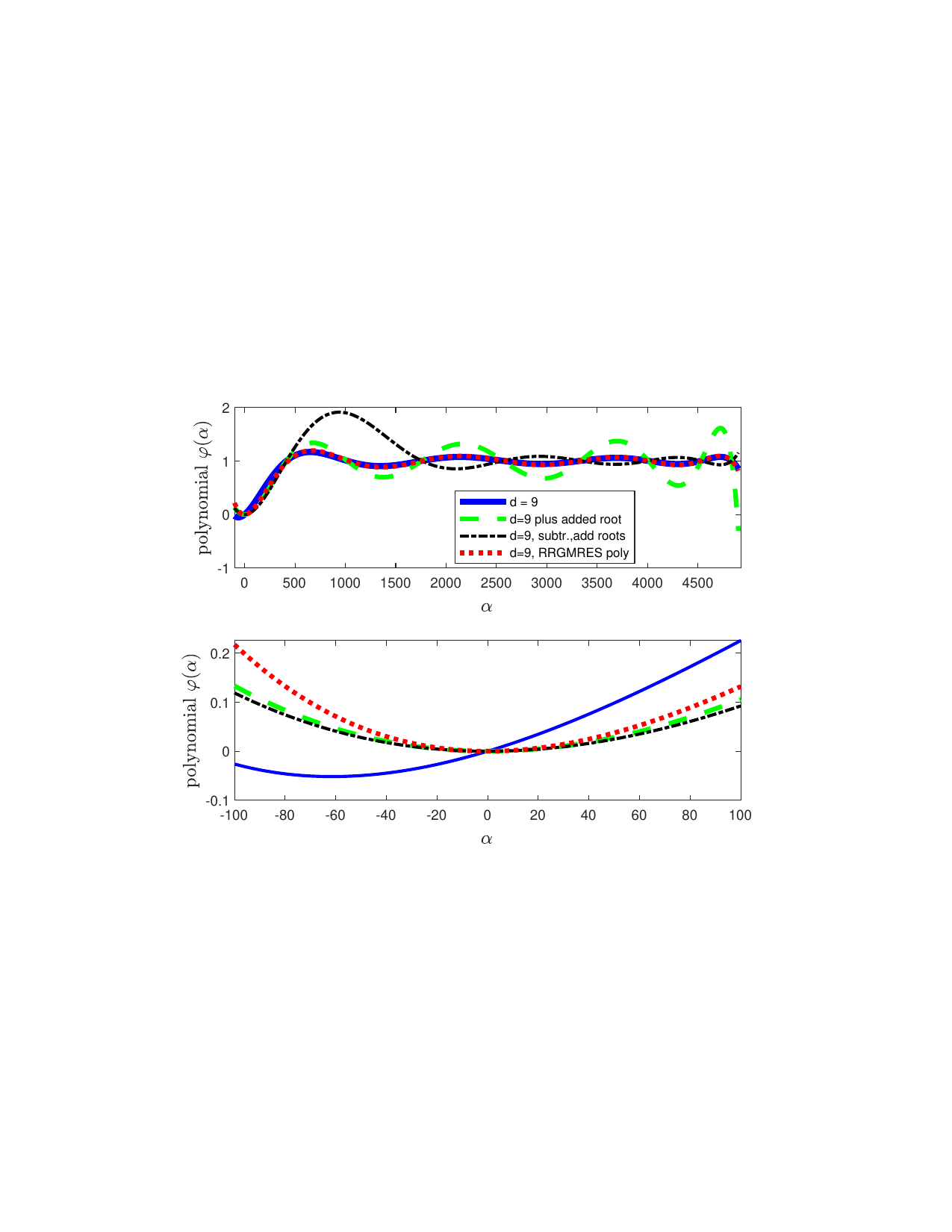}
\vspace{-2.8in}
\caption{Bidiagonal matrix of size $n = 5000$ with 100 negative eigenvalues.  Top has $\varphi$ polynomials of degree $9$ with different balancing.   Bottom has a closeup of the same polynomials near the origin. }
\label{fig:Spec100neg2}
\vspace{-0.2in}
\end{figure}

\subsection{Balance Method 2:  Remove root(s), then add a root}

Sometimes it may be beneficial to remove one or two roots from the GMRES polynomial $\pi$, but still add a balancing root. The motivation for first removing a root is to decrease the value of $|\varphi'(0)|$.  This gives a balancing root $\eta$ further from the origin and linear factor $(1-\frac{\alpha}{\eta})$ closer to 1 across the spectrum of $A$.  To make the value of $|\varphi'(0)|$ smaller, we look for the harmonic Ritz value whose reciprocal (or sum of reciprocals in the complex case) is closest to $\varphi'(0)$.  Balance Method 2 is in Algorithm~\ref{alg:bal2}.

\begin{algorithm}
\caption{Balance Method 2: Remove root(s), then add a root}\label{alg:bal2}
\begin{description}
 \item[0.] Apply GMRES(d) to form a polynomial $\varphi(\alpha)$ of degree $d$.  Let the roots of the corresponding polynomial $\pi(\alpha)$ (harmonic Ritz values) be $\theta_1, \ldots, \theta_d$.
 \item[1.] Compute the difference from the derivative:
 \begin{itemize}
    \item For each real root $\theta_i$, compute $|\varphi'(0)-\frac{1}{\theta_i}|$
    \item For complex conjugate pairs $(\theta_i,\overline{\theta_{i}})$, compute $|\varphi'(0)-(\frac{1}{\theta_i}+\frac{1}{\overline{\theta_{i}}})|$
 \end{itemize}
 \item[2.] Let $\xi$ be the inverse (or sum of inverses) with the smallest difference from $\varphi'(0)$.
 \item [3.] If $|\varphi'(0)-\xi|\geq|\varphi'(0)|$, do not remove any roots and add $\eta=- 1/\sum_{i=1}^d\frac{1}{\theta_i}$ to the list of polynomial roots of $\pi$.
 \item [4.] If $|\varphi'(0)-\xi|<|\varphi'(0)|$, remove the root(s) whose sum yields $\xi$ and add \\$\eta=- 1/\left(\sum_{i=1}^d\frac{1}{\theta_i}-\xi\right)$ to the list of polynomial roots of $\pi$.
 \item[5.] The new polynomial of degree $d$, $d+1$, or $d-1$ is $\varphi_2=1-\pi_r(\alpha)\cdot\left(1-\frac{\alpha}{\eta}\right)$. Where $\pi_r$ is the $\pi$ polynomial after having root(s) removed
\end{description}
\end{algorithm}

{\it Example 3.} 
We consider another bidiagonal matrix with diagonal elements $-100,$ $-99,\ldots,-2,-1,1,2,\ldots, 9850, 9860,9870,\dots,10330,10340,10350$ and super-diagonal of all ones. As can be seen in Table \ref{table:Bidiag3}, polynomial preconditioning provides improvement of up to $14$ times compared to no preconditioning and up to $43$ times improvement with Balance Method 2. For the degree 10 polynomial, we see that Balance Method 2 harms convergence compared to Balance Method 1 and to no balancing. Caution should be used when applying Balance Method 2 to low degree polynomials, as removing a root relatively takes away a lot of information.  Next, with a degree 40 polynomial, the $\varphi(A)$ has an eigenvalue at $2.5* 10^{-4}$ which slows down convergence considerably. Balance Method 1 helps this problem (see top right of figure \ref{fig:Bal2_40}), but still produces a polynomial that is negative over the intervals $(9900,10090),(10230,10280), \textrm{ and } (10340,10350)$ which can be seen in the top left of Figure \ref{fig:Bal2_40}.  This is in contrast to Balance Method 2 which gives a polynomial which remains positive definite over the spectrum of $A$ (see bottom of Figure \ref{fig:Bal2_40}) and gives much faster convergence.

\subsection{Using Cubic Splines}

Since the goal of the balancing root $\eta$ is to ensure that the $\varphi$ polynomial remains positive over the spectrum of $A$, it is useful to verify that the GMRES polynomial $\pi$ stays below $1$.  Then $\varphi$ will stay above zero.   
For this, we employ Hermite cubic splines to estimate the values of the balanced $\pi$ over the intervals between its real roots. This approach is a test to determine the polynomial’s suitability for preconditioning.
We develop cubic splines $C_j$ in each interval $(\theta_j,\theta_{j+1})$ which meet the following conditions:
\begin{enumerate}
    \item $C_j(\theta_j)=C_j(\theta_{j+1})=0$
    \item $C'(\theta_j)=\pi'(\theta_j)$ and $C_j'(\theta_j+1)=\pi'(\theta_j+1)$.
\end{enumerate}
The goal is to see if $C_j$ stays below 1 over the interval, which suggests that $\pi$ behaves likewise. 
We do need not check the intervals where $\pi'(\theta_j)<0$, because it is only possible for $C_j(x)\geq 0$ over $[\theta_j,\theta_{j+1}]$ if $\pi'(\theta_j)\geq 0$.  

\begin{algorithm}
    \caption{Using cubic splines to show positive definiteness}
    \begin{description}
        \item[0.] Let $\theta_1\leq\theta_2\leq\cdots\leq\theta_{\Tilde{d}}$ be the real harmonic Ritz values which are the roots of $\pi$, and $\Tilde{\theta}_{min},\Tilde{\theta}_{max}$. be the Ritz values with smallest (most negative) and largest real parts respectively. 
        
        \hspace{-1cm} For each interval $(\theta_j,\theta_{j+1}), \textrm{ } j=1,\dots, \Tilde{d}-1$, where $\pi'(\theta_j)\geq 0$ and where $\pi'(\theta_j)$ and $\pi'(\theta_{j+1})$ are not both 0, do:
        \item[1.] Calculate $C_j(x)=\frac{1}{6}a(x-\theta_j)^3+\frac{1}{2}b(x-\theta_j)^2+c(x-\theta_j),$
        where $a=\frac{6(\pi'(\theta_{j+1})+\pi'(\theta_j))}{(\theta_{j+1}-\theta_j)^2}$, $b=\frac{-(2\pi'(\theta_{j+1})+4\pi'(\theta_j))}{(\theta_{j+1}-\theta_j)}$, and $c=\pi'(\theta_j)$. 
        \item[2.] Find the critical point(s) 
        $\hat{x}_j=\theta_j+\frac{-b\pm\sqrt{b^2-2ac}}{a}.$ 
        Select the root $\hat{x_j}\in(\theta_j,\theta_{j+1})$.
        \item[3.] If $C_j(\hat{x}_j)>1$ and any of the following hold:
        \begin{itemize}
            \item $(\hat{x}_j\leq\Re(\Tilde{\theta}_{min})\leq\theta_{j+1} \textrm{ and } C_j(\Re(\Tilde{\theta}_{min}))>1)$
            \item $(\theta_j\leq\Re(\Tilde{\theta}_{max})\leq\hat{x}_j \textrm{ and } C_j(\Re(\Tilde{\theta}_{max}))>1)$
            \item $\Re(\Tilde{\theta}_{min})\notin (\hat{x}_j,\theta_{j+1}) \textrm{ and } \Re(\Tilde{\theta}_{max})\notin(\theta_j,\hat{x}_j))$
        \end{itemize}
        Then $\varphi$ is not positive over the spectrum of $A$.    \end{description}
\end{algorithm}

The last part of the algorithm considers the Ritz values along with the harmonic Ritz values.  
Spurious harmonic Ritz values can occur outside of the spectrum, as is also discussed in Section 5.

{\it Example 3. (cont.)}  Figure \ref{fig:Bal2_40} shows that the Balance Method 1 polynomial of degree 40 has large fluctuation over the spectrum of $A$.  The cubic spline test in Algorithm 4 appropriately flags the interval between the two harmonic Ritz values at about $9990$ and $10{,}115$, so the polynomial is rejected.

Another, more complicated, option for testing a polynomial is to actually find the maximum value of $\pi$ on each interval between real roots.  This could be done with a bisection method using the polynomial and its derivative.

\begin{table}\caption{\label{table:Bidiag3} Bidiagonal matrix of size $n=10,000$ with diagonal $-100,-99,\dots,$ $-2,-1,$ $1,2,\dots, 9850, 9860,9870,\dots,10330,10340,10350$ and super diagonal of ones. PP(d)-GMRES(50) is used to solve linear equations. - denotes polynomial degrees where Balance Method 1 resulted in an indefinite spectrum for $\varphi(A)$}. 
\begin{center}
\begin{tabular}{|c||c|c|c|c|c|c|}  \hline\hline

d - degree              & \multicolumn{2}{c|}{No balance}        & \multicolumn{2}{c|}{Balance 1}     & \multicolumn{2}{c|}{Balance 2}  \\ 
of poly $\varphi$          & \multicolumn{2}{c|}{}                  & \multicolumn{2}{c|}{add root}      & \multicolumn{2}{c|}{subtr., add}   \\ \hline \hline
                        & MVP               & Time               & MVP       & Time                   & MVP       & Time          \\
                        & (thous)           & (sec)              & (thous)   & (sec)                  & (thous)   & (sec)          \\ \hline 
No poly                 & 872               & 976                &          &                         &           &                 \\ \hline
10                      & 210               & 28.1               & 298       & 40.4                   & 798       & 136              \\ \hline
40                      & 12,940            & 749                & 4,394     & 254                    & 42.6      & 3.6               \\ \hline
70                      & 69.3              & 5.6                & 377       & 19.0                   & 26.0      & 2.2                \\ \hline
100                     & 59.0              & 4.7                & 203       & 10.6                   & 20.2      & 1.9                 \\ \hline
\hline
\end{tabular}
\end{center}
\end{table}

\begin{figure}[t!]
\begin{center}
  \includegraphics[scale=.60]{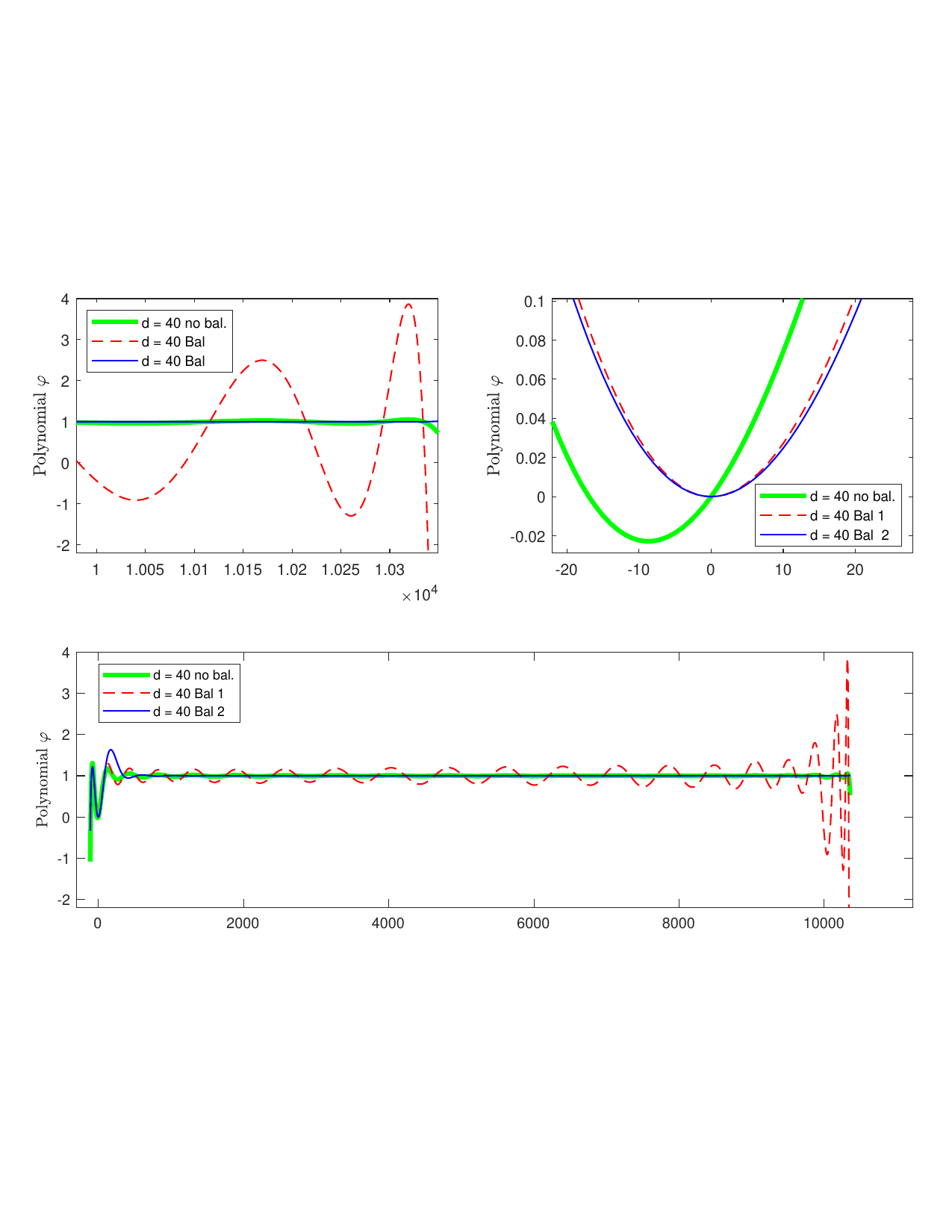}
\end{center}
\vspace{-.27in}
\caption{\label{fig:Bal2_40} 
Bidiagonal matrix of size $n=10{,}000$. The bottom graph has the $\varphi$ polynomial of degree 40 with no balance, balance method 1 and balance method 2. Top right has the same polynomials zoomed in at the origin. Top left demonstrates the intervals where the polynomial from Balance Method 1 is negative.} 
\end{figure}

\subsection{Balancing with a polynomial from Range Restricted GMRES}

This section discusses two methods that use Range Restricted GMRES~\cite{CaLeRe} (RRGMRES).  This approach automatically gives a balanced polynomial.  There is more than one way to represent the polynomial from RRGMRES.  Here we implement it in Newton form and call it Balance Method 3; see Algorithm~\ref{alg:bal3}.


\begin{algorithm}
\caption{Determine the Polynomial from Range Restricted GMRES for Balance Method 3}\label{alg:bal3}
\begin{description}
 \item[0.]  Choose the degree $d$ of the $\varphi$ polynomial.
 \item[1.] Apply the Arnoldi iteration with starting vector $A\cdot b$ for $d$ iterations.  Compute Ritz values $\theta_1, \ldots, \theta_d$ (regular Ritz, not harmonic).
 \item[2.] Generate a matrix $V$ with Newton basis vectors as columns.  So the vectors are $b, (A-\theta_1)b, (A-\theta_2)(A-\theta_1)b, \ldots, (A-\theta_{d-1}) \cdots (A-\theta_2)(A-\theta_1)b $.  This can be modified to keep real vectors for the complex Ritz vector case.  
 \item[3.] Solve the Normal Equations, so $(AV)^T (AV)g = (AV)^T b$.  Then the Newton coefficients are in $g$.
\end{description}
\end{algorithm}


Determining the polynomial for Balance Method 3 is not efficient for high degree polynomials as it uses double the matrix-vector products.  Also, it may not always be stable due to the non-orthogonal basis for $V$ and the use of Normal Equations (the columns of $V$ do approximate an orthogonal matrix).
For a more stable and more complicated implementation of this Newton form of the polynomial, see~\cite[Subsection 3.3]{PPG} (this can be adjusted for RRGMRES).

Balance Method 4 uses a composite polynomial with the Newton form of the RRGMRES polynomial as the inner polynomial.  The outer polynomial comes from polynomial preconditioned GMRES. For details of using composite polynomials, we refer to~\cite{PPGStable} where they are called double polynomials.

{\it Example 2. (cont.)}  The two right-most columns have results with Balance Methods 3 and 4.  These methods are mostly competitive and Method 4 has the best times for high degree polynomials.  Note that Balance Method 3 does not quite give full accuracy at the high degrees.

\subsection{Choice of balancing}

We first give an example where balancing is detrimental, then discuss when balancing should be used and which version is appropriate.  
 
{\it Example 4.}  Let the matrix be diagonal of size $n=5000$ with eigenvalues $-1000, -999, -998, \ldots, -100, 0.1, 0.2, 0.3, \ldots 1, 2, 3, \ldots 4090$.  We apply polynomial preconditioning with and without balancing.  PP(50)-GMRES(50) converges to residual norm below $10^{-10}$ in 10 cycles without balancing.  Using balancing is disastrous.  With Method 1, it takes 4235 cycles, Balance 2 has 6713 cycles and Balance 3 uses 4185.  The top of Figure~\ref{fig:balbad} has the polynomial with and without Balance Method 1.  The unbalanced polynomial comes through the origin with significant slope that allows it to separate the small eigenvalues from the origin more than the balanced polynomial.  The smallest eigenvalue of $\varphi(A)$ without balancing is $6.12*10^{-4}$, while the smallest with balancing is much smaller at $1.27*10^{-6}$.  The bottom of Figure~\ref{fig:balbad} shows the polynomial at these small eigenvalues.

\begin{figure}[t!]
\begin{center}
\includegraphics[width=5.25in]{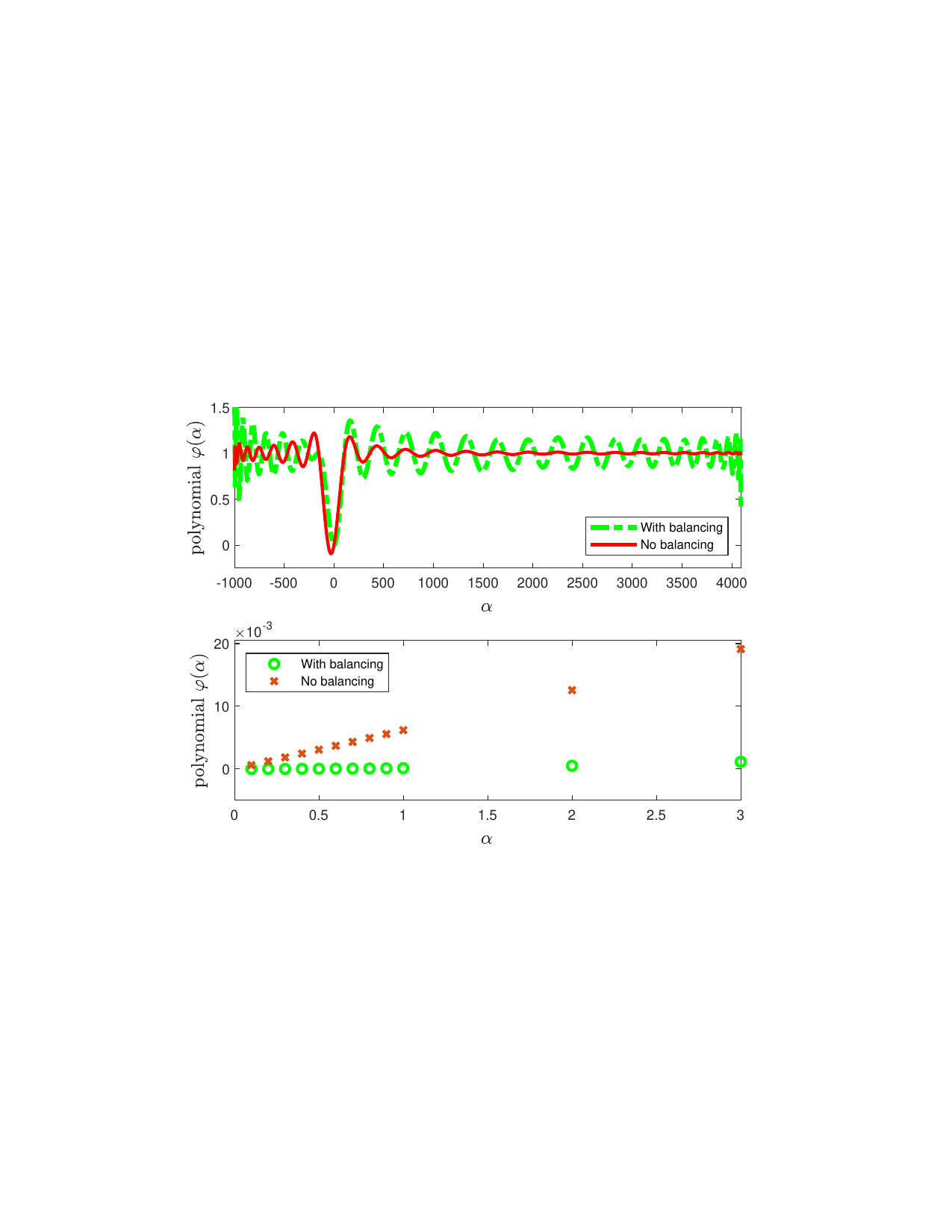}
\end{center}
\vspace*{-25pt}
\caption{\label{fig:balbad}   Matrix with a gap in the spectrum on one side of the origin.  Balance Method 1 is compared to no balancing with $d=50$.  The lower half of the figure has `x' and `o' marks showing the polynomial values at the small eigenvalues.  }
\vspace{-0.15in}
\end{figure}

Now we attempt to give guidance of when to balance.  Some rough knowledge about the spectrum is required.  Otherwise it may be necessary to experiment with and without balancing to determine which works better.  
If the eigenvalues are real or almost all real, and they are fairly equally distributed near the origin, then there will probably be a gain from balancing an indefinite spectrum and hopefully turning it definite.  For such a spectrum that is nearly mirrored on both sides of the origin, use Balance Method 1.  Otherwise, one can try Methods 1 and 2, possibly testing with splines to see if they pass.  If not, or if they don't work in an actual test, switch to Balance 3 or to Balance 4 for high degree polynomials.

As the last example showed, if there is a gap between the origin and the eigenvalues on one side of the spectrum, then it may be best to not balance.  Then the polynomial can dip down in this gap.  The preconditioned spectrum may still be definite.  
Finally, when there are eigenvalues surrounding the origin with significant imaginary parts, this is both a difficult situation and probably one where balancing is not beneficial.  This is discussed in the next section.

\section{Complex Spectra}

Matrices with spectra spread out in the complex plane need to be investigated.  A polynomial cannot be expected to be effective if the origin is completely surrounded by eigenvalues, based on the minimum modulus principle in complex analysis.  For instance, if the spectrum is on a circle centered at the origin, we would want the $\varphi$ polynomial to be zero at the origin and then on the circle have real part near one and imaginary part near zero.  However such a polynomial would have minimum modulus in the interior of the circle, violating the principle.

We look at approaching this difficult situation of having the origin surrounded in this and later sections.
We first have an example where both polynomial preconditioning and balancing are effective for a matrix with a significantly complex spectrum, but not complex at the origin.  Then the next example looks at how far we can surround the origin and still solve linear equations.

{\it Example 5.}  
We consider a Hatano-Nelson~\cite{HaNe} matrix of size $n=2500$, which has a complex and indefinite spectrum; see the lower right part of Figure \ref{fig:Hatano_Nelson}.  The results are in Table~\ref{tab:Hatano_Nelson}.  GMRES(50) does not converge; the residual norm stalls at $||r||=0.359$.  This can be fixed with polynomial preconditioning. 
For degree $d=15$, Balance Methods 3 and 4 convergence rapidly.  Balance methods 1 and 2 fail to converge, but for degrees 20 and 25 they work well.  
The left side of Figure~\ref{fig:Hatano_Nelson} shows the spectra after polynomial preconditioning of degree 25 using no balancing, and Balance Methods 1 and 2.  With no balancing, the spectrum is very indefinite.  A close-up with the two balancings in the upper right part of the figure shows the spectra are now definite.  


\begin{table}
\caption{PPGMRES applied to the Hatano-Nelson ~\cite{HaNe} with $n=2500$, $\gamma = 0.5$,  and $d =0.9*4*\textrm{rand}(n,1)$. For balance method 4, the inner polynomial is selected to have the highest possible degree, up to 15, that evenly divides the degree of the overall composite polynomial.  Times are given in seconds.}
\label{tab:Hatano_Nelson}
\begin{tabular}{|c||c|c|c|c|c|}  \hline\hline

d - degree      & No balance        & Balance 1        & Balance 2      & Balance 3     & Balance 4  \\ 
of poly $\varphi$  &             & add root         & subtr., add    & RRG poly      & Composite      \\ \hline \hline
15              & 4.66              & -                & -              & 2.08          & 2.85            \\ \hline
20              & 22.6              & 4.50             & 4.29           & 2.69          & 3.73             \\ \hline
25              & 12.0              & 3.92             & 3.92           & 3.32          & 4.03              \\ \hline
50              & 10.3              & 7.18             & -              & 2.01          & 3.70               \\ \hline
100             & 64.2              & 5.02             & same as bal 1  & 1.63          & 5.62                 \\ \hline

\end{tabular}
\end{table}

\begin{figure}[t!]
\begin{center}
\includegraphics[scale=.55]{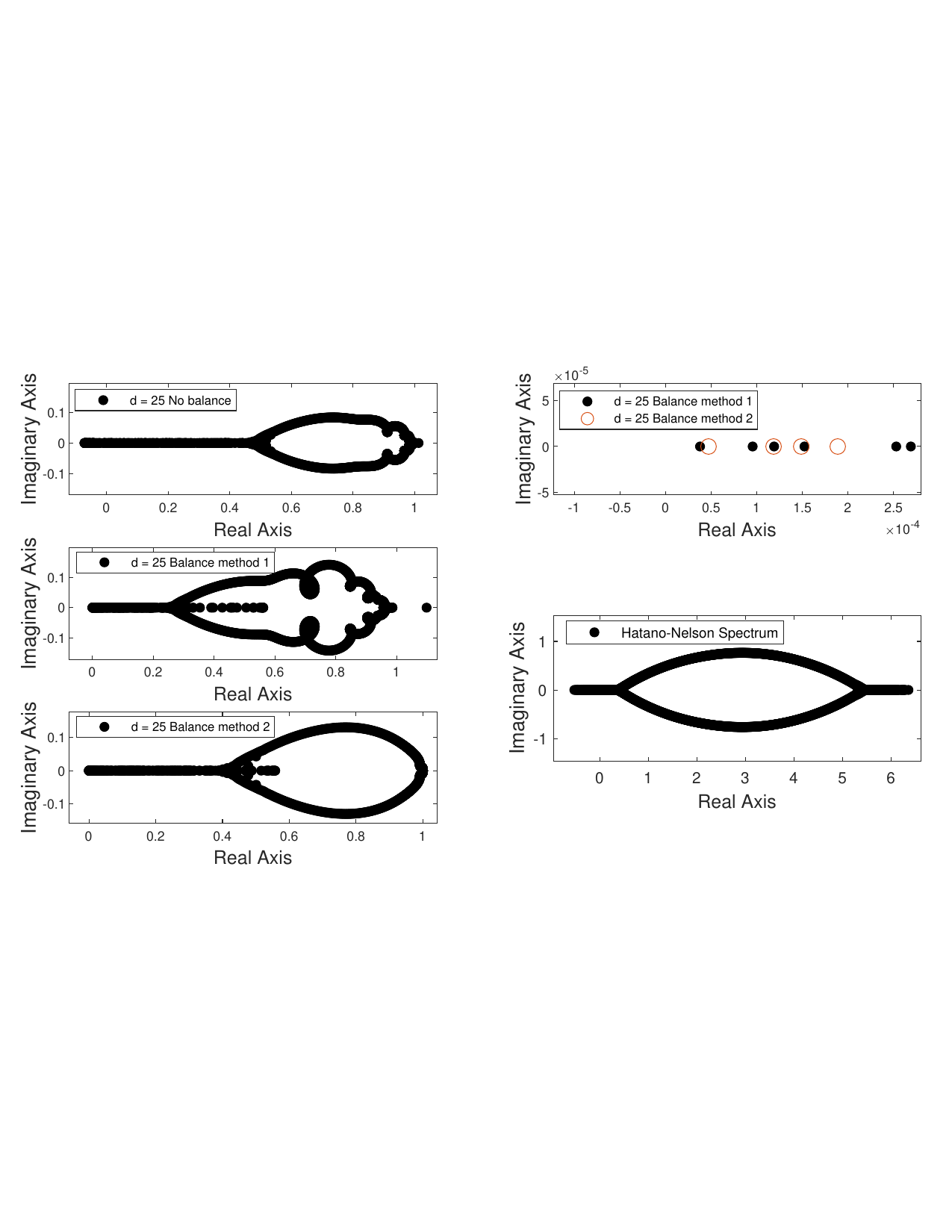}
\end{center}
\
\vspace*{-32pt}
\caption{\label{fig:Hatano_Nelson} {Graphs for the Hatano-Nelson matrix with $n=2500, \gamma=0.5$ and $d=0.9*4*\textrm{rand}(n,1)$. The bottom right graph is the original spectrum of the matrix. The top left is the spectrum transformed with a degree 25 precondition polynomial without balancing. The middle left and bottom left graphs are the spectrum transformed with the balanced method 1 and balance method 2 polynomial, respectively. The top right shows balanced methods 1 and 2 zoomed in at the origin to show that they are positive definite spectra.}}
\vspace{-0.2in}
\end{figure}


{\it Example 6.} 
Polynomial preconditioning is examined for matrices with very complex spectra.  Each matrix has size $n=2000$.  Twenty eigenvalues are equally spaced on 50 rays that move out from the origin and go much of the way around it; see the (blue) dots on the left parts of Figure~\ref{fig:complex}.  GMRES(50) is run with and without polynomial preconditioning to relative residual norm tolerance of $10^{-8}$ and with right-hand sides generated random normal and then normed to one.  No balancing is used.  The red stars on the left parts of Figure~\ref{fig:complex} are the roots of the GMRES residual polynomials of degree 50.  Plots on the right show how the spectrum is changed by the preconditioning.  The first case, shown in the top half of the figure, has eigenvalues 230 degrees around the origin.  The polynomial succeeds in transforming the original indefinite spectrum on the left into one that is nearly definite on the right (there are only six eigenvalues with negative real part after the preconditioning).  
For the second case of 280 degrees, the spectrum is also improved by the preconditioning even though it stays quite indefinite.  There is more spacing from the origin and many eigenvalues are in a blob.  GMRES(50) converges very slowly for the easier matrix and not at all for the tougher one, but PP(50)-GMRES(50) converges rapidly for both cases.  See Table~\ref{Tab: Complex} for results with these two matrices and one in-between them.  For the 230 case, there is remarkable improvement with just a degree 5 polynomial.  For the more difficult matrices, a degree 50 polynomial is needed.  Further cases with eigenvalues further extending around the origin require even higher degree polynomials.  For example with a spectrum 290 degrees around the origin, using a degree 50 polynomial takes 13.9 minutes while $d=150$ converges in only 6.4 seconds.  Going to the even more difficult case of 300 degrees, both a high degree polynomial and a larger GMRES restarting parameter are needed as PP(150)-GMRES(50) takes 13.5 minutes but PP(150)-GMRES(150) converges in 10.9 seconds.  Note that polynomials of degree higher than 150 may not be stable in this situation.    


\begin{table}
\caption{Complex matrix of Example 6.  PP(d)-GMRES(50) is used to solve linear equations with eigenvalues through three angles around the origin.   
}
\begin{center}
\begin{tabular}{|c||c|c||c|c||c|c|}  \hline\hline  
 & \multicolumn{2}{c||}{ Angle $230^{\circ}$ }  &   \multicolumn{2}{c|}{Angle $255^{\circ}$}    &   \multicolumn{2}{c|}{Angle $280^{\circ}$}   \\ \hline \hline
d - degree          &  MVP          & time      &  MVP          & time      &  MVP          & time    \\ 
of poly $\varphi$   & (thou's)      &           &  (thou's)     &           &  (thou's)     &            \\   \hline \hline
1 (no pp)           & 2.6e+6        & 7.6 days  &  -            & -         &  -            & -         \\ \hline
5                   & 11.2          & 0.54 sec  &  33,485       & 18 min  &  -            & -         \\ \hline
50                  & 6.4           & 0.28 sec  &  24.4         & 0.45 sec     & 772           & 6.7 sec       \\ \hline

\hline
\end{tabular}
\end{center}
\label{Tab: Complex}
\end{table}

As mentioned earlier, a $\varphi$ polynomial  in the complex plane cannot be zero at the origin and move only toward 1 as it moves away from the origin.  It needs to have both ups and downs while moving away. However, for this example, the spectrum only partly surrounds the origin.  Thus, the polynomial is somewhat able to head towards 1 as it moves away from the origin over the spectrum.  For the case of the spectrum going 230 degrees around, the upper-left plot in Figure~\ref{fig:levcurve} shows contours for the real part of the degree 5 polynomial and the upper-right has the imaginary part.  The eigenvalues are yellow dots.  The polynomial is able to flatten out over most of the spectrum and push much of the eigenvalues to have real parts between 0.5 and 1.0.  Extending further out from this relatively flat portion in the real plot are five valleys and five ridges.  Heading left from the origin is a valley and to the right of the flat area is a rising ridge.  Valleys and ridges alternate going around.  
Next, for the $280^\circ$ matrix, the real contours are in the bottom-left. 
For this more difficult spectrum, the real part is not able to make it to 0.5 except at a few eigenvalues.   A higher degree polynomial can be significantly more effective.  The real part of the degree 50 polynomial is shown in the lower-right part of the figure.  This polynomial goes above 0.5 except for the part of the spectrum near the origin.  There are many valleys and ridges for this polynomial.

Unlike for the previous example, balancing is not effective.  These contour maps help show why.  Balancing keeps the real part of the polynomial from being able to dive down quickly when moving to the left from the origin.  Similarly, moving to the right, the polynomial could not as quickly move up toward 1.  Two upcoming examples (8 and 10) will back up the result that balancing may be detrimental when the spectrum goes much of the way around the origin.    

\begin{figure}[t!]
\begin{center}
\includegraphics[width=4.5in]{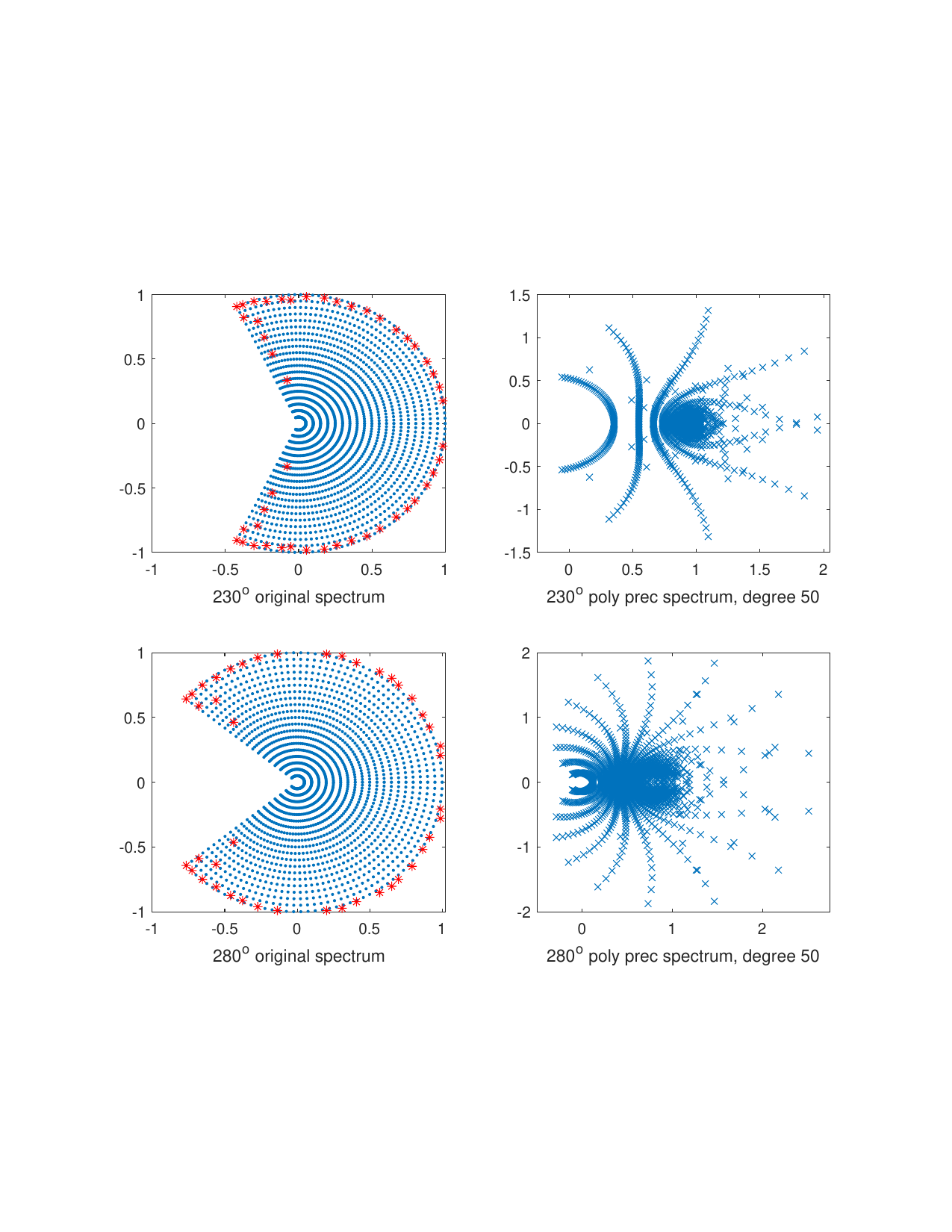}
\end{center}

\vspace*{-20pt}
\caption{\label{fig:complex} The effect of polynomial preconditioning on very complex spectra.  The top two plots are for a matrix with eigenvalues going around $230^{\circ}$, and the bottom plots are for $280^{\circ}$.  On the left side, the original eigenvalues are shown with dots in the complex plane.  The (red) asterisks are the polynomial roots.  On the right two plots, the `x' marks show the eigenvalues of the polynomial preconditioned spectrum of $\varphi(A)$ with degree $d=50$. 
 }
\vspace{-0.3in}
\end{figure}

\begin{figure}[t!]
\begin{center}
\includegraphics[width=4.5in]{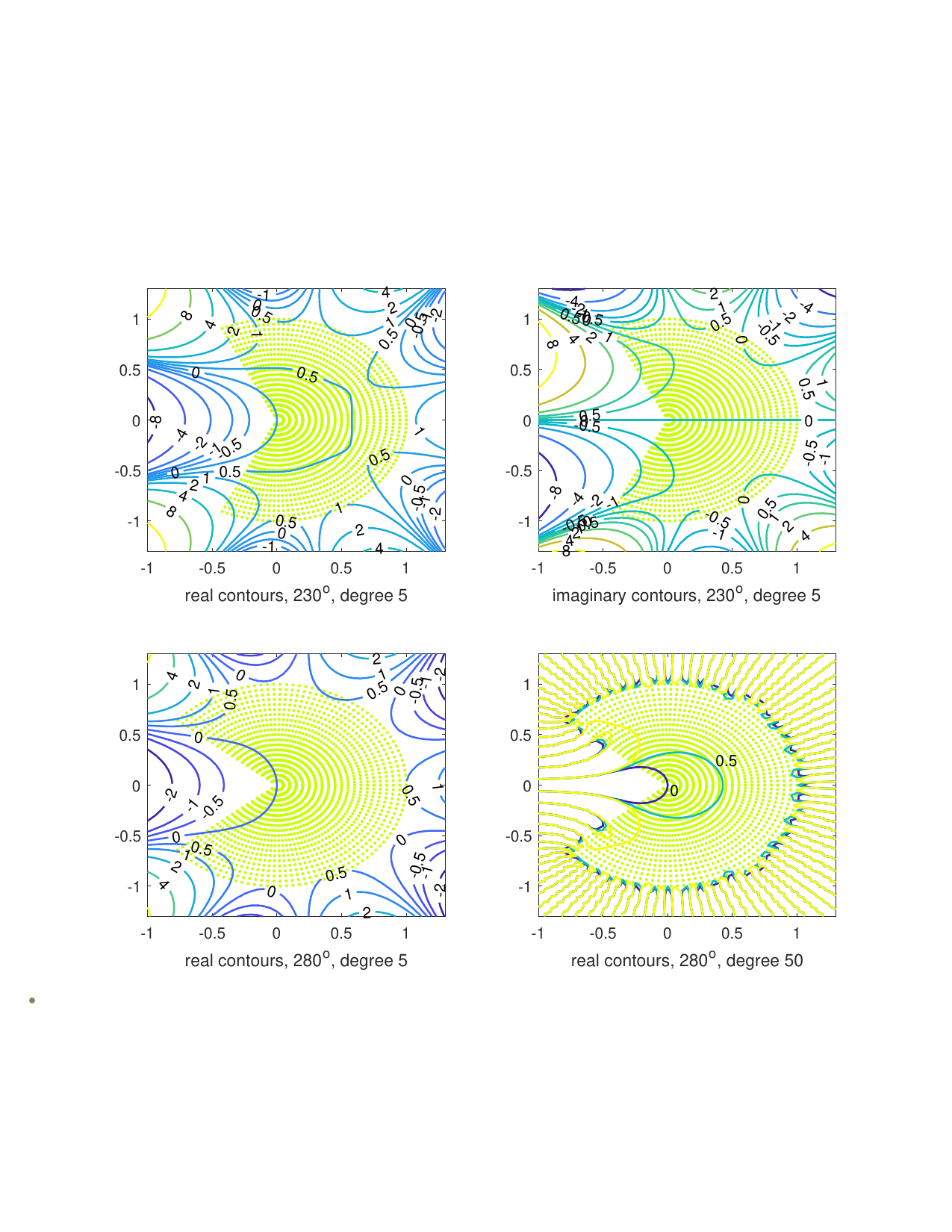}
\end{center}

\vspace*{-20pt}
\caption{\label{fig:levcurve} 
Contour maps for polynomials with the matrices from Example 6. }
\end{figure}

\section{Stability Control}

Here we give stability methods specialized for polynomial preconditioning of indefinite problems.  

Stability of the GMRES residual polynomial is mentioned in Subsection 2.2 (from sources~\cite{PPArn,PPGStable}). 
A $pof$ value is associated with each root $\theta_j$ of the GMRES residual polynomial $\pi$ (harmonic Ritz value), $\pof(\theta_j) = \prod_{i\neq j}\left(1-\frac{\theta_j}{\theta_i}\right)$.  A high $pof$ value indicates a root that stands out from the others and indicates that there may be instability (unless the root is spurious).
To fix this, extra copies of these outstanding roots are added to the $\pi$ polynomial.  However, for a small root $\theta_i$, the linear factor $(I - \frac{A}{\theta_i})$ from an extra copy can blow up the polynomial at large portions of the spectrum.  For an indefinite spectrum, there can be outstanding eigenvalues on one side of the spectrum that are much smaller than eigenvalues on the other side.  For this situation where there are high $pof$'s on the shorter side of the spectrum, we suggest alternatives to adding extra roots.  

The following theorem shows a correlation between a high $pof$ value and the accuracy of the corresponding harmonic Ritz vector.  It is assumed that the root of interest is equal to an eigenvalue, since a large $pof$ will usually correspond to a root that closely approximates an outstanding eigenvalue (for spurious harmonic Ritz values, we do not need to deal with them, as is mentioned right before Algorithm 6).

\begin{theorem}
      Assume a GMRES residual polynomial $\pi$ is generated with $d$ iterations of solving $Ax=b$, where $A$ is diagonalizable and $b$ is norm one.  Let the roots of $\pi$ (the harmonic Ritz values) be $\theta_i$, for $i=1,\ldots,d.$  Let the eigenvectors of $A$ be $z_i$'s and let $Z$ be the matrix with these as columns.
      Let $b = \sum_{i=1}^n \beta_i z_i.$ 
    Assume a particular Ritz value $\theta_j$ equals an eigenvalue $\lambda_j$.  Then $$||r_j||\leq\frac{|\theta_j| \|Z^{-1}\|}{|\beta_j|pof(\theta_j)},$$
where 
$pof(\theta_j) =  \prod_{i\neq j} \left(1-\frac{\theta_j}{\theta_i}\right).$
\end{theorem}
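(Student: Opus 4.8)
The strategy is to read off everything from the factorization of the GMRES residual polynomial. Since $\pi(0)=1$, every harmonic Ritz value $\theta_i$ is nonzero; assuming $\theta_j$ is a simple root, factor $\pi(z)=(1-z/\theta_j)\,q_j(z)$ with $q_j(z)=\prod_{i\neq j}(1-z/\theta_i)$, a polynomial of degree $d-1$ with $q_j(0)=1$. The first step is to identify the harmonic Ritz vector associated with $\theta_j$ as $y_j:=q_j(A)b\in\mathcal{K}_d(A,b)$ (this is classical, and can be checked directly): from the polynomial identity $(z-\theta_j)q_j(z)=-\theta_j\,\pi(z)$ one gets $(A-\theta_j I)y_j=-\theta_j\,\pi(A)b=-\theta_j r_d$, where $r_d=\pi(A)b$ is the GMRES residual, which is orthogonal to $A\mathcal{K}_d(A,b)$; hence $(A-\theta_j I)y_j$ is as well, which is exactly the harmonic Galerkin condition. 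Consequently the residual of the unit-norm harmonic Ritz vector $\hat y_j=y_j/\|y_j\|$ is $r_j=(A-\theta_j I)\hat y_j=-\theta_j r_d/\|y_j\|$, so $\|r_j\|=|\theta_j|\,\|r_d\|/\|y_j\|$.

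It then remains to estimate the two factors. For the numerator, the minimal-residual property of GMRES over degree-$\le d$ polynomials that equal $1$ at the origin, applied to the constant polynomial, gives $\|r_d\|=\|\pi(A)b\|\le\|b\|=1$. For the denominator I would bound $\|y_j\|$ from below by isolating its $z_j$-component in the eigenbasis: writing $b=Z\beta$, we have $y_j=q_j(A)b=Z\,\mathrm{diag}(q_j(\lambda_1),\dots,q_j(\lambda_n))\,\beta$, so its coordinate vector in the eigenbasis is $c=Z^{-1}y_j$ with $c_i=\beta_i q_j(\lambda_i)$. Then $\|y_j\|\ge\|c\|/\|Z^{-1}\|\ge|c_j|/\|Z^{-1}\|$, and since $\lambda_j=\theta_j$ we get $c_j=\beta_j q_j(\theta_j)=\beta_j\prod_{i\neq j}(1-\theta_j/\theta_i)=\beta_j\,pof(\theta_j)$. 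Hence $\|y_j\|\ge|\beta_j|\,|pof(\theta_j)|/\|Z^{-1}\|$. Substituting both estimates into $\|r_j\|=|\theta_j|\,\|r_d\|/\|y_j\|$ yields the claimed bound $\|r_j\|\le|\theta_j|\,\|Z^{-1}\|/(|\beta_j|\,|pof(\theta_j)|)$.

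The only nonroutine point, and the one I would be most careful with, is the identification in the first paragraph: the same cofactor polynomial $q_j$ both generates the harmonic Ritz vector (via $q_j(A)b$) and evaluates at $\theta_j$ to the diagnostic quantity $pof(\theta_j)=q_j(\theta_j)$, together with pinning down which residual $r_j$ denotes. Everything after that is the standard eigenvector-expansion estimate combined with GMRES optimality. A few degenerate situations deserve a passing remark: if $\theta_j$ is a repeated harmonic Ritz value one works with the appropriate reduced cofactor; $\beta_j\neq 0$ is implicitly needed for the bound to be meaningful, which holds whenever $\theta_j$ genuinely approximates $\lambda_j$; and strictly one should write $|pof(\theta_j)|$, since the product can be complex and the statement uses this modulus.
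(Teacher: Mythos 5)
Your proof is correct and follows essentially the same route as the paper: factor $\pi$ to obtain the cofactor polynomial generating the harmonic Ritz vector, bound the numerator $\|(A-\theta_j I)y_j\|$ via GMRES's non-increasing residual norm, and bound $\|y_j\|$ from below by its $z_j$-eigencomponent using $\|Z^{-1}\|$. The only cosmetic difference is that the paper works with the normalized polynomial $\omega_j = q_j/\mathit{pof}(\theta_j)$ so that $\omega_j(\theta_j)=1$, whereas you keep the unnormalized $q_j$; since the Rayleigh-quotient residual $\|r_j\|$ is scale-invariant, the two bookkeeping choices yield the identical bound, and your remarks on $|\mathit{pof}(\theta_j)|$, $\beta_j\neq 0$, and repeated roots are apt.
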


\begin{proof}
Let $y_j$ be a harmonic Ritz vector. 
From~\cite{IEN,GMRES-IR}, it can be written as \[y_j=\omega_j(A)b=s_j\prod_{i\neq j}\left(I-\frac{A}{\theta_i}\right)b \label{eq:yNorm} \tag{5.1},\]
where $s_j$ is the constant that normalizes $\omega_j$ at $\theta_j$, meaning $s_j=\displaystyle\frac{1}{\prod_{i\neq j}\left(1-\frac{\theta_j}{\theta_i}\right)}$.

Multiply \eqref{eq:yNorm} by $(I-\frac{A}{\theta_j})$, use that $\pi(A) = \prod_{i=1}^{n}(1-\frac{A}{\theta_j})$, and rearrange: $$ Ay_i-\theta_jy_j = -\theta_js_j\pi(A)b.$$
Since the GMRES residual norm cannot rise, $\|b\| = 1$ implies that $\|\pi(A)b\| \le 1$.  
So \[||Ay_j-\theta_jy_j||=|\theta_j||s_j|\|\pi(A)b\| \leq |\theta_j||s_j|=\frac{|\theta_j|}{pof(\theta_j)}. \label{eq:normIneq} \tag{5.2}\]
Next, 
\begin{multline*}
||y_j|| = \|Z [\beta_1 \omega_j(\lambda_1), \ldots, \beta_n \omega_j(\lambda_n)]^T\| \geq \frac{1}{\|Z^{-1}\|} \|[\beta_1 \omega_j(\lambda_1), \ldots, \beta_n \omega_j(\lambda_n)]^T\| \\ \geq \frac{1}{\|Z^{-1}\|} |\beta_j| |\omega_j(\lambda_j)| \geq \frac{1}{\|Z^{-1}\|} |\beta_j|, 
\end{multline*}{}
using that $\theta_j = \lambda_j$ and $\omega_j$ is normalized to be one at $\theta_j$. 
Combined with \eqref{eq:normIneq}, we have
$$||r_j||=\frac{||Ay_j-\theta_jy_j||}{||y_j||}\leq \frac{|\theta_j| \|Z^{-1}\|}{|\beta_j|pof(\theta_j)}.$$
    
\end{proof}

This theorem indicates that for an accurate $\theta_j$ with a large \textit{pof}, $y_j$ will generally be a good approximate eigenvector.  This motivates one of the stability procedures that follow, where approximate eigenvectors are used for deflation. 

Algorithm~\ref{alg:StContr} gives our approach for dealing with an unstable polynomial.  It has the previous way of adding extra copies of roots to outstanding eigenvalues, but now only to the side in the complex plane where the spectrum is larger.  Then it has corrections for the other side if there are outstanding eigenvalues there.  The Step 5.a.\ correction uses that instability error on the smaller side is mostly in the direction of just a few eigencomponents corresponding to roots with high $\pof$ values.  So projecting over a few eigenvectors reduces the rogue eigencomponents.  Likewise in Step 5.b., it generally only takes a few iterations of GMRES to improve the residual vector.  

In the algorithm, it is important to identify if there are large spurious harmonic Ritz values.  These are much more likely to occur for indefinite problems than for definite (large spurious harmonic Ritz values correspond to spurious Ritz values of $A^{-1}$ near the origin~\cite{IE}, which for symmetric problems can happen only in the indefinite case).  Extra roots for stability control are not needed at spurious harmonic Ritz values.  If there is a high $\pof$ for a particular large value, then the residual norm can tell us whether or not it is spurious.

\begin{algorithm}
\caption{Choice of Stability Control for an Indefinite Spectrum}\label{alg:StContr}
\begin{description}
 \item[0.] Compute $\pof$ values for the polynomial roots.  Set $\pofcutoff$ (mentioned in Subsection 2.2) and $\textit{rncutoff}$ (used to indicate a reasonably accurate approximate eigenvector).  Determine which is the larger side of the spectrum in the complex plane; this is the side for which the harmonic Ritz values (with spurious values not counted) extend the furthest from the imaginary axis.  Alternatively, Ritz values can be used and then large spurious values are unlikely.
 \item[1.] Optional: Add a limited number of extra copies of roots on the smaller side of the spectrum that have $\pof > \pofcutoff$ and $\|r_j\| \le \textit{rncutoff}$.  Then recompute $\pof$ values. 
 \item[2.] If the largest $\pof$ on the small side of spectrum is greater than $10^{20}$, reduce the degree of the polynomial and begin again.
 \item[3.] Add extra copies of roots on the larger side if $\pof > \pofcutoff$.  The number of copies at a root is the least integer greater than $(log10(\pof(k)) - \pofcutoff) / 14$.
 \item[4.] Apply PP-GMRES.  Solve until a shortcut residual norm reaches the desired tolerance.  If the actual residual norm is not as accurate, then the correction methods can be applied.
 \item[5.] Correction phase: Apply one or both correction steps.
 \begin{itemize}
 \item[a.] Apply Galerkin projection (Algorithm~2 in \cite{MGLE}) over the span of the approximate eigenvectors corresponding to the roots on the smaller side of the spectrum that have both $\pof(\theta_j) \ge \pofcutoff$ and $\|r_j\| \le \textit{rncutoff}$.  This deflates these eigencomponents from the residual vector.
\item[b.] Run GMRES (without polynomial preconditioning) for a few iterations.
 \end{itemize}
\end{description}
\end{algorithm}

In the optional Step 1.\ of the algorithm, perhaps only one root should be added to the shorter side.  Or only add roots that are within half the magnitude of the largest non-spurious harmonic Ritz value on the other side. The effect of adding these roots on the polynomial on the large side can be checked with the Hermite spline polynomials in Subsection 3.3.


{\it Example 7.}  The stability algorithm is tested with a matrix that has outstanding eigenvalues on both sides of the complex plane.  However, the outstanding eigenvalues on the left side are much smaller in magnitude than those on the other side.  
The matrix is bidiagonal of size $n=5000$ with diagonal elements $-500, -400, -300, -200,$ $ -100, 0.001,0.01, 0.02, 0.03, \ldots, 0.09,0.1, 0.02, 0.03,$ $ \ldots, 0.9,1, 2, 3, \ldots 4971, 5000, 5100,$ $ 5200,$ $ 5300, 5400$ and with superdiagonal elements all 0.1.  The right-hand side is generated random normal, and then is normed to one.  PP(d)-GMRES is applied using the stability control in Algorithm~\ref{alg:StContr} with roots added only on the larger (right) side (i.e. Step 1 is not activated).  No balancing is used.  Linear equations are solved until a shortcut residual norm reaches $10^{-10}$.  The $\textit{rncutoff}$ value is not needed because no spurious harmonic Ritz values occur.  The value $\pofcutoff = 10^6$ is used.   The correction in Step 5.a.\ has different numbers of approximate eigenvectors.  For example, with degree 57, only two have reached $\pof > 10^{6}$ while for degree 100 there are five.  For the GMRES correction in Step 5.b., 10 GMRES iterations are used.   
Table~\ref{Tab:Corrections} has results with three types of correction.  First, the eigenvector deflation is in the fifth column, then the next columns have the GMRES correction, followed by both (deflation then GMRES).  Much more accurate results are produced.  Using both corrections is usually better, but may not be needed.    

Because of the difficulty of this problem, polynomial preconditioning is very important.  In fact, PP-GMRES does not converge until the degree of the polynomial reaches 57 (and not for every degree just larger than that).  

We finish this example by activating the optional Step 1 for the polynomial of degree initially 115.  For this high degree polynomial, the method does not converge even in the shortcut residual norm.  We need to activate Step 1 of the algorithm in order to keep the polynomial more controlled on the short side.  Here we add one root copy to each root on the left side with $pof$ over $10^{14}$.  This results in three extra roots.  Then the rest of the algorithm is applied along with the GMRES correction.  The final residual norm is $1.1 * 10^{-10}$.  So by using Step 1, we are able to get an accurate result.  However, we do not succeed with polynomials of much higher degree.

\begin{table}
\caption{Bidiagonal matrix of Example {7}.  PP(d)-GMRES(50) is run followed by correction procedures. 
}
\begin{center}
\begin{tabular}{|c||c|c|c|c|c|c|}  \hline\hline

d - degree          & Time          & max       &  Res Norm     & Deflate       & GMRES     & Both      \\ 
of poly $\varphi$   & (sec's)       & pof       &  w/o correct  & correction    & correct   & corrects  \\   \hline \hline
57 + 2              & 44            & 2.9e+9    & 5.8e-6        & 1.1e-10       & 9.6e-11   & 9.6e-11   \\ \hline
75 + 4              & 0.92          & 1.7e+14   & 1.9e+2        & 2.8e-10       & 1.1e-10   & 5.1e-11   \\ \hline
90 + 4              & 0.97          & 2.5e+18   & 8.3e+4        & 1.5e-9        & 2.5e-9    & 4.2e-11   \\ \hline
100 + 5             & 0.92          & 1.1e+21   & 1.7e+5        & 2.3e-9        & 3.2e-10    & 3.8e-11   \\ \hline 
110 + 5             & 3.5           & 4.6e+23   & 1.0e+9        & 1.4e-5        & 6.3e-8    & 1.2e-7    \\ \hline 
115 + 5             & -             & 9.0e+24   & - & - & - & -       \\ \hline
\hline
\end{tabular}
\end{center}
\label{Tab:Corrections}
\end{table}

{\it Example 8.} The matrix is cz20468 from the SuiteSparse Matrix Collection.  Standard GMRES(50) fails to converge, prompting the use of preconditioning techniques.  ILU factorization (with \textit{droptol}=0.01) is applied as an initial preconditioner.  This produces a preconditioned operator with an indefinite spectrum, for which GMRES(50) still does not convergence.  We add polynomial preconditioning and now can get convergence; see Table~\ref{Tab:Cz}.

For this example, a solution with residual of $10^{-8}$ is sought, so PPGMRES is run until the shortcut residual norm produces a solution of $10^{-10}$ to compensate for potential instability in the polynomial preconditioner. The parameters are set to \textit{pofcutoff} $ = 10^4$ and \textit{rncutoff} $ = 10^{-3}$. Results for polynomial degrees ranging from 5 to 40 are presented in Table~\ref{Tab:Cz}. The higher the degree of the polynomial, the more unstable the polynomial can be. The max $pof$ column provides a glimpse of this as some $pof$'s can become greater than $10^{20}$ for degrees 35 and higher. These high $pof$'s are observed on the larger side, while the pofs on the shorter side remain below $10^{20}$ as stipulated in Algorithm~\ref{alg:StContr}.

At higher polynomial degrees, more vectors are available for deflation. For the degree 20 polynomial only one approximate eigenvector is deflated, while 7 are used for the degree 40 polynomial.  Applications of deflation and/or GMRES correction consistently restore accuracy to the solution. Notably, 10 iterations of GMRES alone often serves as a sufficient correction and even outperform using both corrections for degrees 20 and 30. The bottom two rows of Table~\ref{Tab:Cz} show the degree 40 polynomial with and without the optional step 1 in Algorithm~\ref{alg:StContr}. Applying Step 1 increases the initial accuracy of the residual from $7.6*10^9$ to $810$ which allows the accuracy to reach the desired goal of $10^{-8}$ post deflation.

\begin{table}
\caption{cz20468 matrix with ILU factorization (droptol=0.01) of Example {8}.  PP(d)-GMRES(50) is run followed by correction procedures.}
\begin{center}
\begin{tabular}{|c||c|c|c|c|c|c|}  \hline\hline

d - degree          & Time          & max       &  Res Norm      & Deflate      & GMRES         & Both            \\ 
of poly $\varphi$   & (sec's)       & $pof$       &  w/o correct   & correction   & correction    & corrections                 \\   \hline \hline
1                   & -             &           &                &              &               &                              \\ \hline
5 + 0               & 173           & 5.7       & 5.6e-9         & -            & 2.7e-10       & -                             \\ \hline
10 + 0              & 166           & 5.8e+2    & 2.3e-8         & -            & 2.1e-10       & -                              \\ \hline
20 + 2              & 2186          & 7.5e+7    & 3.7e-7         & 1.6e-8       & 1.8e-10       & 3.3e-10                         \\ \hline
30 + 5              & 404           & 3.7e+19   & 1.6            & 1.1e-8       & 2.9e-10       & 3.9e-10                          \\ \hline
35 + 6              & 333           & 1.4e+24   & 3.2e+4         & 9.4e-9       & 1.9e-9        & 1.2e-9                            \\ \hline 
40 + 7              & 166           & 2.0e+29   & 7.6e+9        & 3.7e-4       & 5.6e-4        & 6.6e-6                             \\ \hline 
\hline 
40 + 8              & 162           & 2.0e+29   & 8.1e+2         & 1.9e-7       & 7.5e-8        & 4.3e-8                               \\ \hline
\hline
\end{tabular}
\end{center}
\label{Tab:Cz}
\end{table}

\section{Convergence Estimates}

This section develops estimates for the effectiveness of polynomial preconditioned GMRES for indefinite problems. Using Chebyshev polynomial results, we derive theoretical estimates on how well polynomial preconditioning can enhance the convergence of restarted GMRES. The analysis reveals a significant reduction in the required number of matrix-vector products under idealized conditions.
We assume that all polynomials can be approximated by Chebyshev polynomials, including both the polynomial for preconditioning and the polynomials that underlie the GMRES method.  We assume the spectrum of the matrix satisfies $\sigma(A)\subset [u,v]\cup[a,b]\subset\mathds{R}$ where $u\ll v<0<a\ll b$, and with the longer interval on the right of the origin.  It is also assumed that
\begin{equation}  \label{approx:1}
    T_m(1+\delta)\doteq 1+m^2\delta
\end{equation}
where $T_m$ is the standard standard Chebyshev polynomial of the first kind of degree $m$ and $0<\delta\ll1$. 

To approximate the GMRES polynomial that is being used for polynomial preconditioning, we will use a composite polynomial.  For a spectrum that is about the same on both sides of the origin, one could use an inner polynomial that is a quadratic and maps to a positive definite spectrum.  Then at that point a standard shifted and scaled Chebyshev polynomial can be applied as the outer polynomial (see \cite[Thm.~6]{IE} for such an approach).  This makes it possible to estimate the success of polynomial preconditioning.  We skip this development with a quadratic and jump ahead to having a cubic as the inner polynomial.  This is better for lopsided spectra that extend further on one side of the origin than the other. The cubic is best for spectra that extend about three times further on one side than the other.  Higher degree inner polynomials could be used for more lopsided spectra.

We first develop a degree 3 polynomial $f(x)$ which maps $[u,v]\cup[a,b]$ onto the interval $[-1,1]$ while ensuring $f(0) > 1$. Consider  $h(x)=(x-a)(x-b)(x-v)$, which has roots at $x=a,b,v$ and local maximum at $\gamma_1=\frac{(a+b+v)-\sqrt{a^2+b^2+v^2-(ab+av+bv)}}{3}$, and a local minimum at $\gamma_2=\frac{(a+b+v)+\sqrt{a^2+b^2+v^2-(ab+av+bv)}}{3}$. It can be shown that $\gamma_1\in[v,a]$ and $\gamma_2\in[a,b]$. It can also be shown that $h(\gamma_2)=h(a+b+v-2\gamma_2)<0$, so for $u<a+b+v-2\gamma_2$, then $h(u)<h(\gamma_2)$. With this, we can define:
\begin{center}
    \[f(x)= \begin{cases} 
      -\frac{2h(x)}{h(u)}+1 & u \leq a+b+v-2\gamma_2 \\
      -\frac{2h(x)}{h(\gamma_2)}+1 & u > a+b+v-2\gamma_2 \\
   \end{cases}
\]
\end{center}

In the case where $u > a+b+v-2\gamma_2$, $$f(0)=\frac{2abv}{(\gamma_2-a)(\gamma_2-b)(\gamma_2-v)}+1\doteq \frac{2abv}{(\frac{-2b}{3}-a)(\frac{-2b}{3}-b)(\frac{-2b}{3}-v)}+1\doteq\frac{-27av}{2b^2}+1$$
and if $u\leq a+b+v-2\gamma_2$ $$f(0)=\frac{2abv}{(u-a)(u-b)(u-v)}+1\doteq \frac{2abv}{u^2(u-b)}+1.$$
In both cases, we conclude that $f(0)=1+\delta$ with $\delta$ small, allowing us to utilize $(\ref{approx:1})$ above.

Now, composing $f(x)$ with the Chebychev polynomial $T_{\frac{m}{3}}$, we obtain: $\frac{T_{\frac{m}{3}}(f(x))}{T_{\frac{m}{3}}(f(0))}$, which forms the desired polynomial of degree $m$. The maximum value of this Chebyshev polynomial over $[u,v]\cup[a,b]$ is: $\frac{1}{T_{\frac{m}{3}}\left(1+\frac{2abv}{(\xi-a)(\xi-b)(\xi-v)}\right)}$, where $\xi=\gamma_2$ or $\xi=u$. This quantity estimates the improvement in residual norm per one cycle of GMRES$(m)$. Using approximation $(\ref{approx:1})$, for either value of $\xi$, the residual norm is improved for approximately:
\begin{equation} \label{approx:2}
    \frac{1}{1+\delta(\frac{m}{3})^2}\doteq 1-\frac{m^2}{9}\delta. \nonumber
\end{equation}
For $d$ cycles of GMRES$(m)$, the improvement factor is approximately: 
\begin{equation}  \label{approx:3}
    \left(1-\frac{m^2}{9}\delta\right)^d\doteq 1-\frac{d m^2}{9}\delta.
\end{equation}

We view a single cycle of PP($d$)-GMRES($m$) as a composition of two polynomials: the preconditioner polynomial from GMRES$(d)$ on the inside and the GMRES$(m)$ polynomial on the outside. This can be modeled by comparing shifted and scaled Chebyshev polynomials, leading to the residual improvement estimate:
\begin{equation}  \label{approx:4}
    \frac{1}{T_m\left(T_{\frac{d}{3}}\left(1+\frac{2abv}{(\xi-a)(\xi-b)(\xi-v)}\right)\right)}\doteq \frac{1}{T_m\left(1+\delta\left(\frac{d}{3}^2\right)\right)}\doteq \frac{1}{1+\delta\left(\frac{m^2d^2}{9}\right)}\doteq 1-\frac{d^2m^2}{9}\delta
\end{equation}

Comparing the improvements from $(\ref{approx:3})$ and $(\ref{approx:4})$, we conclude that polynomial preconditioned GMRES$(d)$ converges approximately $d$ times faster in terms of matrix-vector products.\\

\section{Interior Eigenvalue Problems}

Computing eigenvalues and eigenvectors of large matrices is another important task in linear algebra.  The standard approach is restarted Arnoldi~\cite{So,Arnoldi-R,WuSi,HRAM} or restarted Lanczos for symmetric~\cite{WuSi,Lan-DR} (nonrestarted CG~\cite{HeSt} can be used in LOBPCG~\cite{KnNe} for symmetric problems).
Polynomial preconditioning is even more important for large eigenvalue problems than it is for linear equations.  Eigenvalue problems tend to be difficult, because standard preconditioning is less effective for eigenvalue problems and often not used.  Standard preconditioning can be incorporated into more complicated methods such as LOBPCG, Jacobi-Davidson~\cite{SlvdV} and Preconditioned Lanczos~\cite{PL}.  However, then only one eigenvalue can be targeted at a time~\cite{pescs}.

Eigenvalue polynomial preconditioning using a GMRES polynomial is given in~\cite{PPArn}.  Here we focus on the case where the desired eigenvalues are in the interior of the spectrum.   
The polynomial for preconditioning interior eigenvalue problems is found similarly as for indefinite system of linear equations.  GMRES is applied to a shifted problem in order to generate a polynomial that targets a certain zone.  

Here we give another balancing method that balances on an interval, meaning that the value of the polynomial $\varphi$ is equal at the two ends of the interval (see~\cite{LiXiVeYaSa} for this type of balancing of a Chebyshev polynomial for symmetric eigenvalues).  Unlike the other balancings, it does not attempt to make the derivative of the $\phi$ polynomial zero at a point. However, like Balance Method 1, it is done by adding a root.   We give this Balance Method 5 for the case of balancing around the origin, so on the interval [-a a], but this can be shifted for an interval around another spot.  

\begin{algorithm}
\caption{Balance Method 5: Add a root for balancing on the interval $[-a \ a]$.}\label{alg:bal5}
\begin{description}
 \item[0.] Let polynomial $\varphi(\alpha)$ of degree $d$ correspond to $\pi(\alpha)$  with roots $\theta_1, \ldots, \theta_d$.
 \item[1.] Compute $\beta = \frac{\varphi(a)}{\varphi(-a)},$ then $\eta = a \frac{(1 + \beta)}{(1 + \beta)}$.  Add $\eta$ to the list of polynomial roots.
 \item[2.] The new polynomial of degree $d+1$ is $\varphi_5(\alpha) = 1-\pi(\alpha)*\Big(1 - \frac{\alpha}{\eta}\Big)$.
\end{description}
\end{algorithm}

Balancing the polynomial is important for interior eigenvalue problems because otherwise a lopsided number of eigenvalues may be found on one side of the requested target.  Here we focus on Balance Method 1, but others could be used.  In particular, Balance Method 5 gives essentially the same results as Method 1, so only Method 1 is given in the results.

{\it Example 9.}  
We use a diagonal matrix with eigenvalues $1, 2, 3, \ldots, 499, 500, 500.2,$ $500.4, \ldots, 519.8, 520, 521, 522, \ldots, 4920$.  So $n = 5000$, and there is a group of 101 eigenvalues close together starting at 500.  We seek the 30 eigenvalues nearest 500.33.  Arnoldi(80,40) is used, so the Krylov subspace is built out to dimension 80 and then restarted with 40 vectors.  Reorthogonalization is applied in this and the other examples of eigenvalue computation in this section.  The residual norm tolerance is set to $10^{-8}$.  Table~\ref{Tab:IntEV} has results with polynomial preconditioning of degrees 10, 50 and 100, both with and without balancing.  This is compared to no polynomial preconditioning (note that for some of our interior eigenvalue testing, harmonic Rayleigh-Ritz~\cite{IE,PaPavdV,IEN,HRAM} performs better, but for this particular test regular Rayleigh-Ritz is best and is reported).  The degree 10 polynomial dramatically speeds up the eigenvalue computation and does not need balancing.  Degree 50 is even better (about two orders of magnitude faster than without polynomial preconditioning), and with balancing it finds the eigenvalues closest to the requested value of 500.33.  Without balancing, the polynomial dips below the axis to the left of the origin and so eigenvalues are found much further to the left of 500.33 than to the right.  Also some are missed where the polynomial takes them well below zero (our program finds the ones closest to zero).  With balancing, the correct eigenvalues are quickly found.  The degree 100 polynomial also has problems without balancing, but with balancing it is too volatile at the largest eigenvalues (among the 30 eigenvalues it finds are the large ones 4843, 4846 and 4870).  So here it is best to use a lower degree polynomial.  If a high degree polynomial is desired then Balance Method 4 could be applied.

\begin{table}
\caption{Diagonal Matrix with $n=5000$.  PP(d)-Arnoldi(80,40) is used to find 30 interior eigenvalues near 500.33.  Degrees of the balanced polynomial are one higher than indicated because of the added root.  Residual tolerance is $10^{-8}$.  The asterisks indicate max residual norm only reaches $5.3 * 10^{-8}$. }
\begin{center}
\begin{tabular}{|c||c|c|c||c|c|c|}  \hline\hline
& \multicolumn{3}{|c||}{Unbalanced GMRES Polynomial}       &  \multicolumn{3}{|c|}{Balanced Polynomial }  \\   \hline
d, deg                 & Time       & MVP's     & Eigenvalues   & Time      & MVP's     & Eigenvalues   \\ 
of $\varphi$           & (sec's)    & (tho's)   &               &  (sec's)  & (tho's)   &    \\   \hline \hline
1 (no pp) & 363         & 118       & 496 - 505     &           &  &      \\  \hline
10        & 19          & 61        & 496 - 505     & 20    & 70  & 496 - 505    \\ \hline 
50                      & 3.1       & 38.1      & 478 - 503.4    & 4.4      & 57.2      & 496 - 505      \\ 
   &           &           & (missing some)  &    &   &   \\   \hline 
100       & 1.7     & 32     &  472 - 503.6  & $7.2^*$  & $166^*$  & 496 - 504.4    \\ 
   &           &           & (missing some)  &         &   & + 3 large ones \\   \hline  
\hline
\end{tabular}
\end{center}
\label{Tab:IntEV}
\end{table}

We next consider two tests of computing interior eigenvalues with matrices from applications.

{\it Example 10.} 
The matrix Af23560 is part of the package NEP~\cite{BDDD} (Non-Hermitian Eigenvalue Problem) and is available from SuiteSparse.  The matrix is of size $n=23{,}560$ and was developed for stability analysis of Navier-Stokes equations.  The upper left portion of Figure~\ref{fig:AfB} has the overall spectrum, which is very complex and lies in the left half of the complex plane.  

We target the eigenvalues nearest $-4$ using Arnoldi(600,300).  This is an interior eigenvalue problem, because the 300 nearest eigenvalues do not include the right-most (exterior) ones.  
We compare a polynomial preconditioner of degree 50 to no preconditioner.  The second picture down on the left of Figure~\ref{fig:AfB} has the preconditioned spectrum with no balancing.  Note it is very indefinite.

PP(50)-Arnoldi without balancing is run for 25 cycles and finds 284 Ritz pairs to residual norm below $10^{-3}$.  This takes 32 minutes.  The 284 Ritz values are plotted with (blue) x's on~\ref{fig:AfA}.
Regular restarted Arnoldi finds a similar number of Ritz pairs below $10^{-3}$ in residual norm, specifically 286, with 125 cycles.  This takes 2.6 hours, five times longer, while using less matrix-vector products.  However, the important point is that regular Arnoldi solves an easier problem, focusing more on the eigenvalues nearer the edge of the spectrum.  Even if it is run longer, 467 cycles (9.7 hours) so that the first $200$ residual norms are below $10^{-7}$, the eigenvalues found are still mostly to one side of the target.  Figure~\ref{fig:AfA} shows with (red) circles the 298 Ritz values from this run that have residual norms below $10^{-3}$.  This shows that polynomial preconditioned Arnoldi does better at finding the eigenvalues nearest the target. 

If Balance Method 1 is applied to the degree 50 polynomial, the results are worse: only 252 eigenvalues are accurate to $10^{-3}$ after 25 cycles.  Figure~\ref{fig:AfB} shows how the spectrum is changed with balancing.  The upper right portion has the 50 eigenvalues of $A$ numbered in order of distance from $-4$.  The lower left has where these move to with the preconditioned spectrum.  Then the lower right has them with balancing added.  The balancing does succeed in moving the real eigenvalues on the left side over to the right, so that the real portion of the spectrum looks positive definite.  However, the rest of the spectrum does not move in a predictable fashion.  Also, note these 50 eigenvalues are much closer together after balancing and thus harder to find.  The non-balanced polynomial goes through the desired region with a slope and so does not push the eigenvalues together.

\begin{figure}[t!]
\begin{center}
\includegraphics[width=4.5in]{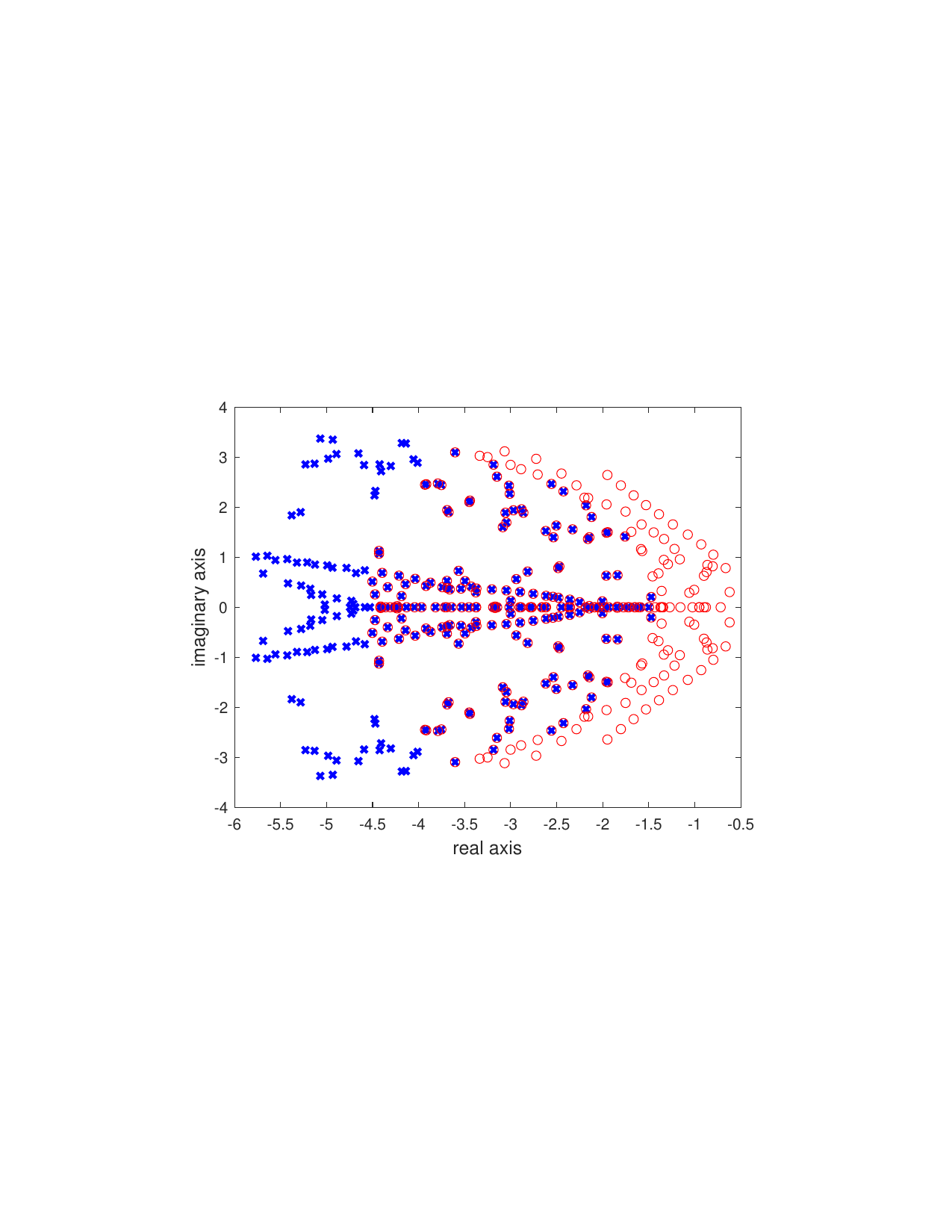}
\end{center}

\vspace*{-20pt}
\caption{\label{fig:AfA} 
Matrix Af23560 with a target of finding eigenvalues near -4.  Ritz values computed by regular Arnoldi are (red) circles, while (blue) x's show values from PP(50)-Arnoldi.  Polynomial preconditioning is much better at finding eigenvalues around the target. }
\vspace{-0.2in}
\end{figure}

\begin{figure}[t!]
\begin{center}
\includegraphics[width=4.5in]{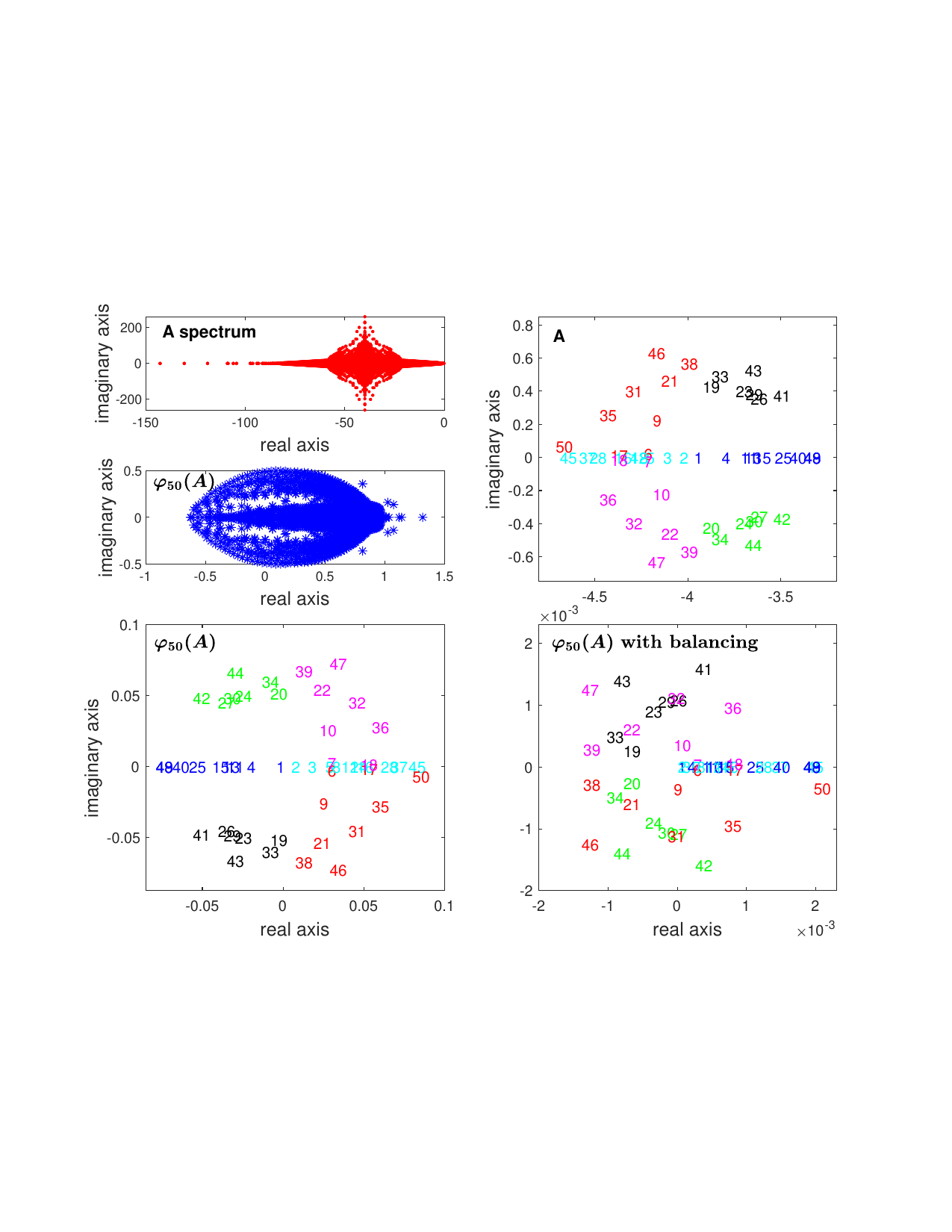}
\end{center}

\vspace*{-20pt}
\caption{\label{fig:AfB} 
The upper left has the spectrum of Af23560 in the complex plane, and just below is the polynomial preconditioned spectrum with no balancing. 
The upper right of the figure has eigenvalues of $A$ numbered in order of distance from $-4$.  The lower left has where these move for the preconditioned spectrum with polynomial of degree 50.  Then the lower right has balancing added.  }
\end{figure}

{\it Example 11.} Large, complex, non-Hermitian matrices arise in quantum chromodynamics (QCD).  However, they can be transformed into Hermitian by multiplying by the QCD $\gamma_5$ matrix.  The spectrum is then real and indefinite with about the same spread of eigenvalues on both sides of the origin.  We compute 15 eigenvalues and eigenvectors around the origin for a matrix of size $n=3.98$ million.  These can be used to deflate solution of linear equations with the conjugate gradient method~\cite{Lan-DR}.  Table~\ref{Tab:Qcdg5} compares restarted Arnoldi(60,25) with and without polynomial preconditioning.  The polynomial is balanced with Balance Method 1.  For this test, it does not make much difference, but we have seen situations where balancing is needed for QCD problems.  The polynomial preconditioning reduces the time dramatically and makes using eigenvalue deflation much more practical for QCD calculations.

\begin{table}[h]
\caption{A quenched QCD matrix of dimension $n = 3.98$ million is from a $24^4$ lattice at zero quark mass.  The 15 eigenvalues nearest the origin are computed with Arnoldi(60,25) and stopped when 15 eigenvalues reach residual norms below $10^{-8}$. }
\begin{center}
\begin{tabular}{|c|c|}  \hline 
degree of polynomial    & time in hours     \\ \hline \hline
no polynomial           &   149             \\ \hline
 d = 25+1               &   7.82            \\ \hline
 d = 100+1              &   5.49            \\ \hline
\end{tabular}
\end{center}
\label{Tab:Qcdg5}
\end{table}

\section{Conclusion}

Polynomial preconditioning is especially important for difficult indefinite linear equations and interior eigenvalues.  Standard preconditioning is generally less effective for such indefinite problems so polynomial preconditioning can assist.

The work in this paper makes polynomial preconditioning more practical for general matrices.  Polynomial preconditioning faces special challenges for indefinite problems.  These are addressed here, first with methods to balance the polynomial with the goal of creating a definite spectrum.  
Then an approach is given for making the polynomial stable for indefinite problems.  In this, the previous stability method of adding extra copies of roots is applied only to one side of the spectrum.  For the other side, corrections are given using eigenvalue deflation and using GMRES iterations.

Looking forward, the parallelizabilty of polynomial preconditioners makes them well-suited for modern high-performance computing environments, including multi-core processors and GPUs. In some cases, polynomial preconditioners may even replace traditional methods, because polynomials are easier to parallelize than incomplete factorizations. The effects of PPGMRES on GPU architectures should be further investigated to fully understand its potential for accelerating large computations. Future work should also focus on further researching the effectiveness of polynomial preconditioning for interior eigenvalue problems and comparing performance across a wider range of problems.

\section*{Acknowledgments}  The authors would like to thank Mark Embree for providing the Hatano-Nelson matrix in Example 5.

\bibliographystyle{siam}

\end{document}